\newcommand{\cal}{\mathcal}
\renewcommand{\epsilon}{\varepsilon}
\newcommand{\newsection}[1]
{\subsection{#1}\setcounter{theorem}{0} \setcounter{equation}{0}
\par\noindent}
\newtheorem{theorem}{Theorem}
\newtheorem{lemma}[theorem]{Lemma}
\newtheorem{corr}[theorem]{Corollary}
\newtheorem{proposition}[theorem]{Proposition}
\newtheorem{deff}[theorem]{Definition}
\newcommand{\bth}{\begin{theorem}}
\newcommand{\ble}{\begin{lemma}}
\newcommand{\bcor}{\begin{corr}}
\newcommand{\bdeff}{\begin{deff}}
\newcommand{\bprop}{\begin{proposition}}
\newcommand{\ele}{\end{lemma}}
\newcommand{\ecor}{\end{corr}}
\newcommand{\edeff}{\end{deff}}
\newcommand{\eprop}{\end{proposition}}
\newcommand{\la}{\lambda}
\newcommand{\e}{\varepsilon}
\newcommand{\supp}{\text{supp }}
\renewcommand{\Pi}{\varPi}
\renewcommand{\epsilon}{\varepsilon}
\newcommand{\R}{{\mathbb R}}
\newcommand{\Zt}{{\mathbb Z}^2}
\newcommand{\torus}{{\mathbb T}^2}
\newcommand{\vertiii}[1]{{\left\vert\kern-0.25ex\left\vert\kern-0.25ex\left\vert #1 
    \right\vert\kern-0.25ex\right\vert\kern-0.25ex\right\vert}}
\begin{document}

\title[Refined and microlocal Kakeya-Nikodym bounds]
{Refined and microlocal Kakeya-Nikodym bounds for eigenfunctions in two dimensions}
%
%
%
%
%
%
\thanks{The authors were supported in part by the NSF grants DMS-1301717 and DMS-1361476, respectively.}

\author[M. D. Blair]{Matthew D. Blair}
\address{Department of Mathematics and Statistics, University of New Mexico, Albuquerque, NM 87131, USA}
\email{blair@math.unm.edu}
\author[C. D. Sogge]{Christopher D. Sogge}
\address{Department of Mathematics, Johns Hopkins University, Baltimore, MD 21093, USA}
\email{sogge@jhu.edu}

\begin{abstract}
We obtain some improved essentially sharp Kakeya-Nikodym estimates for eigenfunctions in two-dimensions.  We obtain these by proving stronger related
microlocal estimates involving a natural decomposition of phase space that is
adapted to the geodesic flow.
\end{abstract}

\maketitle

\newsection{Introduction and main results}

Suppose that $(M,g)$ is a two-dimensional compact Riemannian manifold
and $\{e_\la\}$ are the associated eigenfunctions.  That is, if
$\Delta_g$ is the Laplace-Beltrami operator, we have
$$-\Delta_g e_\la(x)=\la^2e_\la(x),$$
and, we assume throughout, that the eigenfunctions are normalized to have
$L^2$-norm one, i.e.,
$$\int_M |e_\la|^2\, dV_g=1,$$
where $dV_g$ is the volume element.

The purpose of this paper is to obtain essentially sharp estimates that link, in
two dimensions, the size of $L^p$-norms of eigenfunctions with $2<p<6$ to their
$L^2$-concentration near geodesics.  
Specifically, we have the following:

\begin{theorem}\label{theorem1.1}  For every $0<\e_0\le \frac12$ we have
\begin{equation}\label{1.1}
\|e_\la\|_{L^4(M)}\lesssim_{\e_0} \la^{\frac{\e_0}4}
\|e_\la\|_{L^2(M)}^{\frac12}
\times
\vertiii{e_\la}_{KN(\la,\e_0)}^{\frac12}
\end{equation}
if
\begin{equation}\label{1.1'}
\vertiii{e_\la}_{KN(\la,\e_0)}=
\Bigl(\, \sup_{\gamma\in \varPi} \, \la^{\frac12-\e_0} \int_{{\mathcal T}_{\la^{-\frac12+\e_0}}(\gamma)}|e_\la|^2
\, dV\Bigr)^{\frac12}
\end{equation}
Equivalently, if $\e_0>0$ then there is a $C=C(\e_0,M)$ so that
\begin{equation}\label{1.1''}
\|e_\la\|_{L^4}\le C\la^{\frac18}\|e_\la\|_{L^2(M)}^{\frac12}\times \Bigl(\,  \sup_{\gamma\in \varPi} \,  \int_{{\mathcal T}_{\la^{-\frac12+\e_0}}(\gamma)}|e_\la|^2
\, dV\Bigr)^{\frac14},
\end{equation}
and therefore if $\int_M|e_\la|^2\, dV=1$, we have for any $\e>0$, there is a $C=C(\e,M)$ so that
\begin{equation}\label{1.1'''}
\|e_\la\|_{L^4(M)}\le C\la^{\frac18+\e}\sup_{\gamma\in \varPi}\|e_\la\|_{L^2({\mathcal T}_{\la^{-\frac12}}(\gamma))}^{\frac12}
\le C\la^{\frac1{16}+\e}\sup_{\gamma\in \varPi}\|e_\la\|_{L^4({\mathcal T}_{\la_{j_k}^{-\frac12}}(\gamma))}^{\frac12}.
\end{equation}
\end{theorem}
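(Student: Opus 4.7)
The plan is to prove \eqref{1.1}; the reformulation \eqref{1.1''} is immediate from the definition of $\vertiii{\cd}_{KN(\la,\e_0)}$, and \eqref{1.1'''} follows from \eqref{1.1''} by sending $\e_0\to 0^+$ together with H\"older's inequality on the tube $\mathcal{T}_{\la^{-1/2}}(\gamma)$ (whose two-dimensional area is $O(\la^{-1/2})$).

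For \eqref{1.1} I would work microlocally at the scale $\delta:=\la^{-1/2+\e_0}$. Introduce a smooth, bounded-overlap partition of unity $\{q_\nu\}_{\nu\in\varPi}$ on the cosphere bundle $S^*M$ subordinate to sectors of angular width $\delta$ and indexed (essentially) by unit-speed geodesics $\gamma_\nu$, and quantize to zero-order pseudodifferential operators $Q_\nu$ with $\sum_\nu Q_\nu=I$ microlocally near the characteristic set $\{|\xi|_g=\la\}$. Propagation of singularities for the half-wave group $e^{-it\sqrt{-\Delta_g}}$ with $|t|\lesssim 1$, via Egorov's theorem, shows that each $Q_\nu e_\la$ is essentially concentrated in physical space in the tube $\mathcal{T}_\nu:=\mathcal{T}_\delta(\gamma_\nu)$, so that
\[
\|Q_\nu e_\la\|_{L^2(M)}^2\;\lesssim\;\|e_\la\|_{L^2(\mathcal{T}_\nu)}^2+O(\la^{-N})\;\le\;\delta\,\vertiii{e_\la}_{KN(\la,\e_0)}^2.
\]

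The main step is then to expand $\|e_\la\|_4^4=\|(\sum_\nu Q_\nu e_\la)^2\|_2^2$ and split the resulting sum over pairs $(\nu,\nu')$ into a \emph{transverse} regime (angular separation $\gtrsim\delta$) and a \emph{tangential} regime. For the transverse pairs I would establish a bilinear $L^2$ orthogonality estimate of C\'ordoba--Fefferman type for the products $(Q_\nu e_\la)(Q_{\nu'}e_\la)$. On Euclidean space this is the standard angular bilinear inequality for pieces frequency-localized in caps on the $\la$-sphere; on $(M,g)$ it is obtained via an FIO parametrix for $e^{-it\sqrt{-\Delta_g}}$ on a short time interval combined with stationary phase on the frequency-sum map, the transversality of the sectors providing the nondegeneracy. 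Combining this orthogonality with the preceding tube bound for $\|Q_\nu e_\la\|_{L^2}^2$ and with the Bessel-type estimate $\sum_\nu\|Q_\nu e_\la\|_{L^2}^2\lesssim\|e_\la\|_{L^2}^2$ yields a transverse contribution of order $\la^{\e_0}\|e_\la\|_{L^2}^2\vertiii{e_\la}_{KN(\la,\e_0)}^2$, which after a fourth root is \eqref{1.1}. The tangential regime involves only $O(1)$ nearly-parallel sectors per index, and its contribution is handled by applying Sogge's sharp $L^4$ bound to the aggregated microlocal piece on a single tube and invoking the KN estimate once more.

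The principal obstacle is the transverse bilinear $L^2$ estimate on a curved background. Whereas on Euclidean space it is the classical C\'ordoba--Fefferman lemma, on a manifold one must set up an FIO parametrix for $e^{-it\sqrt{-\Delta_g}}$, invoke Egorov's theorem to preserve the angular sector structure along the geodesic flow over propagation time $O(1)$, and carry out stationary phase uniformly in $\delta$. Quantifying the angular separation that survives the flow, while ensuring that the tube localization of each $Q_\nu e_\la$ is not destroyed, is precisely what forces the loss $\la^{\e_0}$ relative to the flat case.
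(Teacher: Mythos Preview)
Your overall architecture---microlocalize at the single scale $\delta=\la^{-1/2+\e_0}$, split $\|e_\la\|_4^4$ into near-diagonal and off-diagonal pairs, treat the diagonal by the sharp $L^4$ bound, and attack the off-diagonal bilinearly via the half-wave parametrix---matches the paper's, and your handling of the tangential term is essentially what the paper does for the first term in \eqref{3.3}.

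The gap is in the transverse regime.  A \emph{binary} split ``angular separation $\gtrsim\delta$'' together with a single C\'ordoba--Fefferman orthogonality statement does not close the sum, and this is precisely the point of the paper.  The bilinear $L^2\to L^2$ bound for $(\chi_\la Q_\nu f)(\chi_\la Q_{\nu'}f)$ carries a factor $\theta^{-1/2}$, where $\theta$ is the angle between the two pieces; it is this \emph{angle dependence} that must be exploited.  The paper therefore makes a further dyadic decomposition $|\nu-\nu'|\in[2^\ell,2^{\ell+1})$ and, for each $\ell$, regroups the $\theta_0$-pieces into coarser blocks at scale $\theta_\ell=2^\ell\theta_0$ (see \eqref{3.7}), proving the bilinear estimate at \emph{that} scale (Proposition~\ref{prop3.1}, via Lemma~\ref{lemma3.2}).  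The gain $\theta_\ell^{-1/2}$ then exactly matches $\theta_\ell^{-1/2}\sup_\mu\|Q^\mu_{\theta_\ell}f\|_{L^2}\le\vertiii{f}_{KN(\la,\e_0)}$, so each dyadic shell contributes a uniform amount and the sum over $\ell$ costs only $\log\la$.  This multi-scale structure is also why the paper's microlocal norm \eqref{2.11} involves a supremum over all $\theta_0\le\theta\le1$, not just $\theta=\theta_0$.  As the introduction notes explicitly, the earlier arguments of Bourgain and of the second author, which lack this dyadic layering, only give exponents $(\tfrac34,\tfrac14)$ rather than $(\tfrac12,\tfrac12)$; your outline, as written, is closer to those earlier arguments.

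There is a second, related issue: the orthogonality you invoke (all products $(Q_\nu e_\la)(Q_{\nu'}e_\la)$ with distinct unordered pairs $\{\nu,\nu'\}$ almost orthogonal in $L^2$) is the flat C\'ordoba--Fefferman statement, and on a curved background it is not available in that form.  What the paper actually proves, \eqref{3.10}, is a much weaker orthogonality: at each fixed coarse scale $\theta$, products of blocks with \emph{bounded} index separation are almost orthogonal to one another.  This follows from a lower bound on $|\nabla_x(d_g(x,y)+d_g(x,y')-d_g(x,\tilde y)-d_g(x,\tilde y'))|$ and integration by parts, and it is enough only because of the preceding multi-scale regrouping.  Your proposal would need either the full single-scale orthogonality on $(M,g)$, which you have not indicated how to obtain, or the dyadic-in-angle decomposition, which you have not introduced.
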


Here, $\varPi$ denotes the space of unit-length geodesics in $M$ and the last factor in \eqref{1.1'} involves  averages of $|e_\la|^2$ over  $\la^{-\frac12+\e_0}$ tubes about $\gamma\in \varPi$.  Also, for simplicity, we are only stating
things here and throughout for eigenfunctions, but the results easily extend to quasi-modes using results from
\cite{SZq}.

Note that if $\e_0=\frac12$, then \eqref{1.1} is equivalent to the eigenfunction estimates from \cite{SEig}
$$\|e_\la\|_{L^4(M)}\lesssim \la^{\frac18}\|e_\la\|_{L^2(M)},
$$
which are saturated by highest weight spherical harmonics on the standard two-sphere.  We also remark that, up to the factor $\la^{\frac{\e_0}4}$, the estimate \eqref{1.1} is saturated by both the highest weight spherical harmonics and zonal functions on $S^2$.

 Bourgain \cite{BRest} (with a slight loss) and the
second author \cite{SKN} proved this type of inequality where the first norm in the right is raised to the $\frac34$ power and the second to the $\frac14$ power.  The
inequalities in \cite{SKN} were not formulated in this way but easily lead
to this result.
The approach in this paper made an inefficient use of the Cauchy-Schwarz inequality to handle the ``easy'' term (not the bilinear one), which led to this loss.  The strategy for proving \eqref{1.1} will be to make a angular dyadic decomposition of a bilinear expression and pay close attention to the dependence of the bilinear estimates in terms of the angles, which we shall exploit using a multi-layered microlocal decomposition of phase
space.

Before turning to the details of the proof, let us record a few simple corollaries of
our main estimate.

If $\{a_{\la_{j_k}}\}_{k=0}^\infty$ is a sequence depending on a subsequence $\{\la_{j_k}\}$ of the eigenvalues of $\Delta_g$, then we say that
$$a_\la=o_-(\la^\sigma)$$
if there is some $\e>0$ and $C<\infty$ such that
$$|a_\la|\le C(1+\la)^{\sigma-\e}.$$
Then, using the above theorem we get the following:

\begin{corr}
The following are equivalent
\begin{equation}\label{1.2}
\|e_{\la_{j_k}}\|_{L^4(M)}=o_-(\la^{\frac18}_{j_k})
\end{equation}
\begin{equation}\label{1.3}
\sup_{\gamma\in \varPi}
\|e_{\la_{j_k}}\|_{L^4({\mathcal T}_{\la_{j_k}^{-\frac12}}(\gamma))}=o_-(\la^{\frac18}_{j_k})
\end{equation}
\begin{equation}\label{1.4}
\sup_{\gamma\in \varPi}
\|e_{\la_{j_k}}\|_{L^2({\mathcal T}_{\la_{j_k}^{-\frac12}}(\gamma))}=o_-(1).
\end{equation}
Also, if either
\begin{equation}\label{1.6}
\sup_{\gamma\in \varPi}
\int_\gamma |e_\la|^2 \, ds=O(\la^{\e}_{j_k}), \quad \forall \e>0,
\end{equation}
or
\begin{equation}\label{1.7}
\sup_{\gamma\in \varPi}
\|e_{\la_{j_k}}\|_{L^2({\mathcal T}_{\la_{j_k}^{-\frac12}}(\gamma))}=O(\la_{j_k}^{-\frac14+\e}), \quad \forall \e>0.
\end{equation}
then
\begin{equation}\label{1.5}
\|e_{\la_{j_k}}\|_{L^4(M)}=O(\la^{\e}_{j_k}), \quad \forall \e>0.
\end{equation}
Here, $ds$ denotes arclength measure on $\gamma$.
\end{corr}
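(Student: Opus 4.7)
The equivalences $\eqref{1.2}\Leftrightarrow\eqref{1.3}\Leftrightarrow\eqref{1.4}$ will be proved in circular fashion using Theorem~\ref{theorem1.1} together with H\"older's inequality, while the two implications $\eqref{1.6}\Rightarrow\eqref{1.5}$ and $\eqref{1.7}\Rightarrow\eqref{1.5}$ will be reduced to the $L^2$ tube bound entering Theorem~\ref{theorem1.1}.

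The implication $\eqref{1.4}\Rightarrow\eqref{1.2}$ is immediate from the first inequality of $\eqref{1.1'''}$: if $\sup_\gamma\|e_{\la_{j_k}}\|_{L^2(\mathcal{T}_{\la_{j_k}^{-1/2}}(\gamma))}\le C\la_{j_k}^{-\delta}$ for some $\delta>0$, then $\eqref{1.1'''}$ applied with $\e=\delta/4$ yields $\|e_{\la_{j_k}}\|_{L^4(M)}\le C\la_{j_k}^{1/8-\delta/4}$, which is $o_-(\la_{j_k}^{1/8})$. Next, $\eqref{1.2}\Rightarrow\eqref{1.3}$ is trivial since $\mathcal{T}_{\la^{-1/2}}(\gamma)\subset M$. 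To close the loop, $\eqref{1.3}\Rightarrow\eqref{1.4}$ follows from H\"older's inequality together with the area bound $|\mathcal{T}_{\la^{-1/2}}(\gamma)|\lesssim \la^{-1/2}$ in two dimensions:
$$\|e_\la\|_{L^2(\mathcal{T}_{\la^{-1/2}}(\gamma))}\le |\mathcal{T}_{\la^{-1/2}}(\gamma)|^{1/4}\|e_\la\|_{L^4(\mathcal{T}_{\la^{-1/2}}(\gamma))}\lesssim \la^{-1/8}\|e_\la\|_{L^4(\mathcal{T}_{\la^{-1/2}}(\gamma))},$$
which under $\eqref{1.3}$ is $o_-(1)$.

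For the second half, $\eqref{1.7}\Rightarrow\eqref{1.5}$ follows again from $\eqref{1.1'''}$: substituting the hypothesis gives $\|e_{\la_{j_k}}\|_{L^4(M)}=O(\la_{j_k}^{1/8+\e}\,\la_{j_k}^{-1/8+\e'/2})=O(\la_{j_k}^{\e+\e'/2})$, yielding $\eqref{1.5}$ by arbitrary choice of $\e,\e'>0$. The implication $\eqref{1.6}\Rightarrow\eqref{1.5}$ reduces to $\eqref{1.7}$ through a foliation of the thin tube by unit-length geodesics. Working in Fermi coordinates $(r,s)$ about $\gamma$, where $s$ is arc-length along $\gamma$ and $r$ is signed perpendicular distance, for each $r\in[-\la^{-1/2},\la^{-1/2}]$ let $\gamma_r\in\varPi$ be the unit-length geodesic starting at $(r,0)$ with initial tangent vector $\partial_s|_{(r,0)}$. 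A Jacobi-field estimate shows that $\gamma_r$ deviates from the curve $\{(r,s):s\in[0,L]\}$ by only $O(\la^{-1/2})$ over unit length, so the map $(r,s)\mapsto \gamma_r(s)$ is a smooth diffeomorphism with bounded Jacobian whose image, after an absolute-constant enlargement of the $r$-range, covers $\mathcal{T}_{\la^{-1/2}}(\gamma)$. A change of variables then yields
$$\int_{\mathcal{T}_{\la^{-1/2}}(\gamma)}|e_\la|^2\,dV\lesssim \int_{-c\la^{-1/2}}^{c\la^{-1/2}}\int_{\gamma_r}|e_\la|^2\,ds\,dr\lesssim \la^{-1/2}\sup_{\gamma'\in\varPi}\int_{\gamma'}|e_\la|^2\,ds,$$
which under $\eqref{1.6}$ is $O(\la_{j_k}^{-1/2+\e})$. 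Consequently $\sup_\gamma\|e_{\la_{j_k}}\|_{L^2(\mathcal{T}_{\la_{j_k}^{-1/2}}(\gamma))}=O(\la_{j_k}^{-1/4+\e/2})$, which is precisely the hypothesis of $\eqref{1.7}$, and the previous step gives $\eqref{1.5}$.

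The only step that requires genuine geometric care is the geodesic foliation used in $\eqref{1.6}\Rightarrow\eqref{1.5}$: on a curved surface the perpendicular translates of $\gamma$ are not themselves geodesics, but in the $\la^{-1/2}$-thin regime this is a harmless $O(\la^{-1/2})$ perturbation absorbed by a dimensional enlargement of the tube radius. Every other step reduces to direct substitution into $\eqref{1.1'''}$, a trivial inclusion of domains, or a single application of H\"older's inequality on a region of area $\lesssim \la^{-1/2}$.
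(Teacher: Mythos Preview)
Your proof is correct and follows the same approach as the paper: the equivalence of \eqref{1.2}--\eqref{1.4} via the cycle (trivial inclusion, H\"older on a region of area $\lesssim\la^{-1/2}$, then \eqref{1.1'''}), and the reduction of \eqref{1.7} to \eqref{1.5} by direct substitution into \eqref{1.1'''}. The paper dispatches the second half in a single sentence (``an easy consequence of the Theorem''), whereas you have supplied the geodesic-foliation argument needed to pass from the restriction hypothesis \eqref{1.6} to the tube bound \eqref{1.7}; that extra detail is correct and is indeed the natural way to carry it out.
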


Clearly \eqref{1.2} implies \eqref{1.3}.  Also, \eqref{1.4} follows from \eqref{1.3} and H\"older's inequality.  Since \eqref{1.1} shows that \eqref{1.4}
implies \eqref{1.2}.  The last part of the corollary is also an
easy consequence of the Theorem.

Note also that \eqref{1.1'''} says that if $e_{\la_{j_k}}$ is a sequence of eigenfunctions with 
$$\|e_{\la_{j_k}}\|_{L^4(M)}=\Omega(\la_{j_k}^{\frac18})$$ 
then for any $\e$ there must be a sequence of 
shrinking geodesic tubes 
$\{{\mathcal T}_{\la_{j_k}^{-\frac12}}(\gamma_k)\}$ for which for some $c=c_\e>0$ we have
$$\|e_{\la_{j_k}}\|_{L^4({\mathcal T}_{\la_{j_k}^{-\frac12}}(\gamma_k))}\ge c \,  \la_{j_k}^{\frac18-\e}
$$
In other words, up to a factor of $\la^{-\e}$ for any $\e>0$, they fit the profile of the highest weight spherical harmonics by having  maximal $L^4$-mass
on a sequence of shrinking $\la^{-\frac12}$ tubes.

Like in Bourgain's estimate, \eqref{1.1} involves a slight loss, but this is not so important in view of the above application.  In a later work we hope to show
that \eqref{1.1} holds without this loss (in other words with
$\e_0=0$), which should mainly involve refining
the $S_{1/2,1/2}$ microlocal arguments that are to follow.  Note that, because
of the zonal functions on $S^2$, this result would be sharp.

This paper is organized as follows.  In the next section we shall introduce
a microlocal Kakeya-Nikodym norm and an inequality involving it, \eqref{m.1}, which
implies \eqref{1.1}.  This norm is associated to a decomposition of
phase space which is naturally associated to the geodesic flow on
the cosphere bundle.  In particular each term in the decomposition
will involve bump functions which are supported in tubular neighborhoods
of unit geodesics in $S^*M$.  This decomposition and the resulting
square function arguments are similar to the earlier ones in the joint
paper of Mockenhaupt, Seeger and the second author \cite{MSS}, but there
are some differences and new technical issues that must be overcome.  We
do this and prove our microlocal Kakeya-Nikodym estimate in \S3.  There
after some pseudo-differential arguments we reduce matters to a 
oscillatory integral estimate which is a technical variation on the classical one in H\"ormander~\cite{H}, which was the main step in
his proof of the Carleson-Sj\"olin theorem~\cite{CS}.  The result which
we need does not directly follow from the results in \cite{H}; however,
we can prove it by adapting H\"ormander's argument and using Gauss' lemma.
After doing this, in \S4 we shall see how our results are in some sense
related to Zygmund's theorem \cite{Zy}
saying that in 2-dimensions eigenfunctions on the standard torus have bounded $L^4$-norms.  Specifically we shall see there that if we could obtain
the endpoint version of \eqref{1.1}, we would be able to recover Zygmund's theorem with no loss if we also knew a conjectured result that arcs on
$\la S^1$ of length $\la^{\frac12}$ contain a uniformly bounded number of

In a later paper with S. Zelditch we hope to strengthen our results and also extend
them to higher dimensions, as well as to present applications in the spirit
of \cite{SZ} of the microlocal bounds which we obtain.  The current authors would
like to thank S. Zelditch for a number of stimulating discussions.


\newsection{Microlocal Kakeya-Nikodym norms}

As in \cite{SKN}, \cite[\S 5.1]{SBook}, we use the fact that we can use a reproducing operator to
write $e_\la =\chi_\la f=\rho(\la-\sqrt{\Delta_g})e_\la$, for $\rho\in {\cal S}$
satisfying $\rho(0)=1$, where, if $\supp \Hat \rho\subset (1,2)$, we also have
modulo $O(\la^{-N})$ errors (see \cite[Lemma 5.1.3]{SBook},
\begin{equation}\label{2.1}
 \chi_\la f(x) =
 \frac1{2\pi}\int \Hat \rho(t) e^{i\la t} \bigl(e^{- it\sqrt{\Delta_g}} f \bigr)(x) \, dt
 = \la^{\frac12} \int e^{ i\la \psi(x,y)} a_{\la}(x,y) 
f(y)\, dV(y),
\end{equation}
where 
\begin{equation}\label{2.5}
\psi(x,y)=d_g(x,y)
\end{equation}
is the Riemannian distance function and if, as we may, we assume that the
injectivity radius is 10 or more $a_{\la}$ belongs to a bounded
subset of $C^\infty$ and satisfies
\begin{equation}\label{2.6}
a_{\la}(x,y)=0, \quad \text{if } \, d_g(x,y)\notin (1,2).
\end{equation}

Thus, in order to prove \eqref{1.1}, it suffices to work in a local coordinate
patch and show that if $a$  is smooth and satisfies the support assumptions in  \eqref{2.6} and $0<\delta<1/10$ is small but fixed and if 
$$x_0=(0,y_0), \quad 1/2<y_0<4,$$ 
is also fixed then
\begin{multline}\label{2.7}
\Bigl\|\la^{\frac12} \int e^{i\la \psi(x,y)}a(x,y) f(y)\, dy\Bigr\|_{L^4(B(0,\delta))}^2
\\
\lesssim_{\e_0} \la^{\frac{\e_0}2} \|f\|_{L^2}\times \vertiii{f}_{KN(\la,\e_0)},
\quad \text{if  supp }f\subset B(x_0,\delta).
\end{multline}
Here $B(x,\delta)$ denotes the $\delta$-ball about $x$ in our coordinates.  
We may assume that in our local coordinate system the line segment $(0,y)$, $|y|<4$ is a geodesic.

In order to prove \eqref{2.7} we also need to define a microlocal version of the above Kakeya-Nikodym norm.  
  We first
choose $0\le \beta\in C^\infty_0({\mathbb R}^2)$ satisfying
\begin{equation}\label{2.8}
\sum_{\nu\in {\mathbb Z}^2} \beta(z+\nu)=1, \quad \text{and } \,  \, \text{supp } \beta \subset \{x\in {\mathbb R}^2: \, |x|\le 2\}.
\end{equation}

To use this bump function, let $\Phi_t(x,\xi)=(x(t),\xi(t))$ denote the geodesic flow on the unit cotangent bundle.  Then if $(x,\xi)$ is a unit cotangent
vector with $x\in B(x_0,\delta)$ and $|\xi_1|<\delta$, with $\delta$ small enough, it follows that there is a unique $0<t<10$ so that $x(t)=(s,0)$ for some $s(x,\xi)$.  If then for this $t$,
$\xi(t)=(\xi_1(t),\xi_2(t))$, it follows that $\xi_2(t)$ is bounded from below.  Let us then set $\varphi(x,\xi)=(s(x,\xi), \xi_1(t)/|\xi(t)|)$.  Note that $\varphi$ then
is a smooth map from such unit cotangent vectors to ${\mathbb R}^2$.  Also, $\varphi$ is constant on the orbit of $\Phi$.  Therefore,
$|\varphi(x,\xi)-\varphi(y,\eta)|$ can be thought as measuring the distance from the geodesic in our coordinate patch through $(x,\xi)$ to that of the one
through $(y,\eta)$.

Let $\alpha(x)$ be a nonnegative $C^\infty_0$ function which is one in 
$B(x_0,\tfrac32\delta)$ and zero outside of $B(x_0,2\delta)$.  
Given $\theta=2^{-k}$ with $\la^{-\frac12}\le \theta \le 1$, and $\nu\in {\mathbb Z}^2$ let $\Upsilon\in C^\infty({\mathbb R})$ satisfy
\begin{equation}\label{2.9}
\Upsilon(s)=1, \quad s\in [c, c^{-1}], \quad \Upsilon(s)=0, \quad s\notin [c/2, 2c^{-1}],
\end{equation}
for some $c>0$ to be specified later.  We then put
\begin{equation}\label{2.10}
Q_\theta^\nu(x,\xi)=\alpha(x) \, \beta\bigl(\theta^{-1}\varphi(x,\xi)+\nu\bigr) \, \Upsilon(|\xi|/\la) .
\end{equation}
This is a function of unit cotangent vectors, and we also denote its homogeneous of degree zero extension to the cotangent bundle with the zero
section removed by $Q_\theta^\nu(x,\xi)$, $\xi\ne 0$, and the resulting pseudodifferential operator by $Q_\theta^\nu(x,D)$.
Then if $f$ is as in \eqref{2.7}, we define its microlocal Kakeya-Nikodym norm corresponding to frequency $\la$ and angle $\theta_0=\la^{-\frac12+\e_0}$ to be
\begin{multline}\label{2.11}
\vertiii{f}_{MKN(\la,\e_0)}
\\ 
=\sup_{\theta_0 \le \theta\le 1}\bigl(\, 
 \sup_{\nu \in {\mathbb Z}^2}\theta^{-\frac12} \|Q_{\theta}^\nu(x,D)f\|_{L^2({\mathbb R}^2)}\, \bigr) +\|f\|_{L^2({\mathbb R}^2)}, \quad \theta_0=\la^{-\frac12+\e_0}.
\end{multline}
Note that 
$$\sup_{\nu \in {\mathbb Z}^2}\theta^{-\frac12} \|Q_{\theta}^\nu(x,D)f\|_{L^2({\mathbb R}^2)}$$
 measures the maximal microlocal concentration of $f$ about all unit geodesics in the scale of $\theta$.  This is because
of the fact that if we consider the restriction of $Q^\nu_\theta$ to unit cotangent vectors and if $Q^\nu_\theta(x,\xi)\ne 0$, then $\supp Q^\nu_\theta$ is contained in an $O(\theta)$ tube in the space of unit cotangent vectors about the orbit $t\to \Phi_t(x,\xi)$.

Let us collect a few facts about these pseudodifferential operators.  First, the $Q^\nu_\theta$ belong to a bounded subset of $S^0_{1/2+\e_0,1/2-\e_0}$
(pseudodifferential operators of order zero and type $(1/2+\e_0,1/2-\e_0)$),
if $\la^{-\frac12 +\e_0}\le \theta\le 1$, with $\e_0>0$ fixed.  Therefore, there is a uniform constant $C_{\e_0}$ so that
\begin{equation}\label{2.12}
\|Q^\nu_\theta(x,D)g\|_{L^2}\le C_{\e_0}\|g\|_{L^2}, \quad \la^{-\frac12 +\e_0}\le \theta\le 1.
\end{equation}
Similarly, if $P^\nu_\theta=(Q^\nu_\theta)^*\circ Q^\nu_\theta$, then by \eqref{2.8}, for such $\theta$,  $\sum_\nu P^\nu_\theta$ belongs to a
bounded subset of $S^0_{1/2+\e_0,1/2-\e_0}$, and so we also have the uniform bounds
\begin{equation}\label{2.13}
\bigl\|\sum_{\nu\in {\mathbb Z}^2}P^\nu_\theta(x,D)g \bigr\|_{L^2}\le C_{\e_0}\|g\|_{L^2}, \quad \la^{-\frac12 +\e_0}\le \theta\le 1.
\end{equation}

We can relate the microlocal Kakeya-Nikodym norm to the Kakeya-Nikodym norm if we realize that if the $\delta>0$ above is small enough then
there is a unit length geodesic $\gamma_\nu$ so that $Q^\nu_\theta(x,\xi)=0$ for $x\notin {\mathcal T}_{C\theta_\nu}(\gamma)$, with $C$ being
a uniform constant.  As a result, since $Q^\nu_\theta(x,\xi)=0$ if $|\xi|$ is not comparable to $\la$,
we can improve \eqref{2.12} and deduce that for every $N=1,2,\dots$, 
that there is a uniform constant $C'$ so that we have
\begin{equation}\label{2.14}
\|Q^\nu_\theta(x,D)g\|_{L^2} \le C_{\e_0}\bigl(\int_{{\mathcal{T}_{C'\theta}(\gamma_\nu)}} |g|^2 dy\bigr)^{\frac12}
+C_N\la^{-N}\|g\|_{L^2}, \quad  \la^{-\frac12 +\e_0}\le \theta\le 1,
\end{equation}
since the kernel $K^\nu_\theta(x,y)$ of $Q^\nu_\theta(x,D)$ is $O(\la^{-N})$ for any $N$ if $y$ is not in
${\mathcal T}_{C'\theta}(\gamma_\nu)$, with $C'$ sufficiently large but fixed.  (See Figure 1.)
Since $$\theta^{-\frac12} \bigl(\int_{{\mathcal{T}_{C'\theta}(\gamma_\nu)}} |g|^2 dy\bigr)^{\frac12}\lesssim
\sup_{\gamma\in \varPi}\bigl(\theta_0^{-1}\int_{{\mathcal{T}_{\theta_0}(\gamma)}} |g|^2 dy\bigr)^{\frac12}, \quad
\la^{-\frac12+\e_0}=\theta_0\le \theta\le 1,$$
we have
\begin{equation}\label{2.15}
 \sup_{\nu \in {\mathbb Z}^2}\theta^{-\frac12} \|Q_{\theta}^\nu(x,D)f\|_{L^2({\mathbb R}^2)}
\le C_{\e_0} \vertiii{g}_{KN(\la,\e_0)}, \quad \la^{-\frac12+\e_0}\le \theta \le 1,
\end{equation}
meaning that we can  dominate the microlocal Kakeya-Nikodym norm by the Kakeya-Nikdodym norm.

From this, we conclude that we would have \eqref{2.7} if we could show
\begin{multline}\label{main}
\Bigl\|\int \la^{\frac12}e^{i\la \psi(x,y)}a(x,y) f(y)\, dy\Bigr\|_{L^4(B(0,\delta))}^2
\\
\lesssim_{\e_0} \la^{\frac{\e_0}2} \|f\|_{L^2}\times \vertiii{f}_{MKN(\la,\e_0)},
\quad \text{if  supp }f\subset B(x_0,\delta).
\end{multline}
We note also that since $\chi_\la e_\la =e_\la$, this inequality of course yields
the following microlocal strengthening of Theorem~\ref{theorem1.1}:

\begin{theorem}\label{microthm}
For every $0<\e_0\le \tfrac12$ we have
\begin{equation}\label{m.1}
\|e_\la\|_{L^4(M)}\lesssim_{\e_0} \la^{\frac{\e_0}4}
\|e_\la\|_{L^2(M)}^{\frac12}
\times
\vertiii{e_\la}_{MKN(\la,\e_0)}^{\frac12}.
\end{equation}
if $\vertiii{e_\la}_{MKN(\la,\e_0)}$ is as in \eqref{2.11}.
\end{theorem}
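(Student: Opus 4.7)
Since $\chi_\la e_\la = e_\la$, Theorem~\ref{microthm} reduces via \eqref{2.1} to the oscillatory-integral bound \eqref{main}. Squaring that inequality, the goal is to control $\|(Tf)^2\|_{L^2(B(0,\delta))}$ by $\la^{\e_0/2}\|f\|_{L^2}\vertiii{f}_{MKN(\la,\e_0)}$, where $Tf(x) = \la^{1/2}\!\int e^{i\la\psi(x,y)}a(x,y)f(y)\,dy$. My plan, following the strategy announced in the introduction, is to expand $(Tf)^2$ bilinearly using the partitions $\{Q^\nu_\theta\}$ at every dyadic angular scale $\theta = 2^{-k}$ with $\theta_0 = \la^{-\frac12+\e_0} \le \theta \le 1$. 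A Whitney-type grouping then yields
\[
(Tf)^2 \;=\; \sum_{\theta_0 \le \theta \le 1}\;\sum_{\substack{|\nu_1-\nu_2|\sim 1 \\ \text{at scale }\theta}} T(Q^{\nu_1}_\theta f)\,T(Q^{\nu_2}_\theta f) \;+\; (\text{near-diagonal at }\theta_0),
\]
modulo $O(\la^{-N})$ errors from the $S^0_{1/2+\e_0,1/2-\e_0}$ calculus. The point is that within each term at scale $\theta$, the two microlocal pieces are associated to geodesics in $S^*M$ separated in angle by $\sim \theta$.

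The central estimate is a bilinear oscillatory-integral bound: for pairs at angular separation $\sim \theta$, one should have
\[
\bigl\|T(Q^{\nu_1}_\theta f)\cdot T(Q^{\nu_2}_\theta f)\bigr\|_{L^2(B(0,\delta))}^{2} \;\lesssim\; C(\la,\theta)\,\|Q^{\nu_1}_\theta f\|_{L^2}^2\,\|Q^{\nu_2}_\theta f\|_{L^2}^2,
\]
with $C(\la,\theta)$ sharp enough that after summing via the almost-orthogonality of the bilinear products in $L^2$ (which relies on the essential disjointness of their frequency supports) and the bound \eqref{2.13}, the total contribution of scale $\theta$ is controlled by $\theta^{1/2}C(\la,\theta)^{1/2}\|f\|_{L^2}\vertiii{f}_{MKN(\la,\e_0)}$. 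The logarithmic dyadic sum over $\theta\in[\theta_0,1]$ is then absorbed into the $\la^{\e_0/2}$ loss, while the near-diagonal piece at the base scale $\theta_0$ is handled by a trivial estimate combined with the defining property of the MKN norm. Proving the bilinear bound is the technical heart of the argument: after a $T^*T$ reduction one reaches an oscillatory integral of Carleson-Sj\"olin type whose phase involves $\psi(x,y_1)-\psi(x,y_2)$. To verify the required non-degeneracy of the mixed Hessian and extract the precise $\theta$-dependence, we invoke Gauss' lemma---identifying $\nabla_x d_g(x,y)$ with the unit tangent to the geodesic from $y$ to $x$---and then adapt H\"ormander's Hessian computation from \cite{H} so that the angular geometry enters explicitly.

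The main obstacle is precisely this adaptation of H\"ormander's oscillatory integral bound. In the constant-coefficient setting of \cite{MSS} the relevant Lagrangian is explicit; here the phase $d_g$ is only given implicitly through the Riemannian structure, and obtaining the bilinear estimate uniformly in $\theta$ down to $\theta_0 = \la^{-1/2+\e_0}$ requires tight control of the curvature of the distance spheres across a wide range of scales. A secondary technical point is that the $Q^\nu_\theta$ belong to $S^0_{1/2+\e_0,1/2-\e_0}$ rather than to the more standard $S^0_{1,0}$, so compositions and commutators with $T$ incur controlled losses that ultimately force the restriction $\e_0>0$; removing it, as remarked in the introduction, is what would be needed to reach the sharp endpoint matched by zonal harmonics.
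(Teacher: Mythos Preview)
Your proposal is correct and follows essentially the same route as the paper: the Whitney-type bilinear decomposition over dyadic angular scales, the almost-orthogonality of the bilinear pieces in $L^2$ (the paper's \eqref{3.10}), the $\theta$-dependent bilinear oscillatory-integral bound (Proposition~\ref{prop3.1} and Lemma~\ref{lemma3.2}), and the use of Gauss' lemma to verify the mixed-Hessian nondegeneracy are all exactly what the paper does. Two minor points of precision: the paper organizes the decomposition by first cutting at the finest scale $\theta_0$ and then regrouping pairs with $|\nu-\nu'|\sim 2^\ell$ via coarser cutoffs $Q^\mu_{c_0\theta_\ell}$ (your Whitney formulation is equivalent), and the near-diagonal piece at scale $\theta_0$ is not handled by a purely trivial bound but rather by invoking the $L^2\to L^4$ eigenfunction estimate from \cite{SEig} together with \eqref{2.13}---this is where the $\la^{\e_0}$ factor actually appears, with the dyadic sum over $\ell$ contributing only a logarithm.
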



\begin{figure}
\begin{center}
\resizebox{3.5cm}{!}{\input{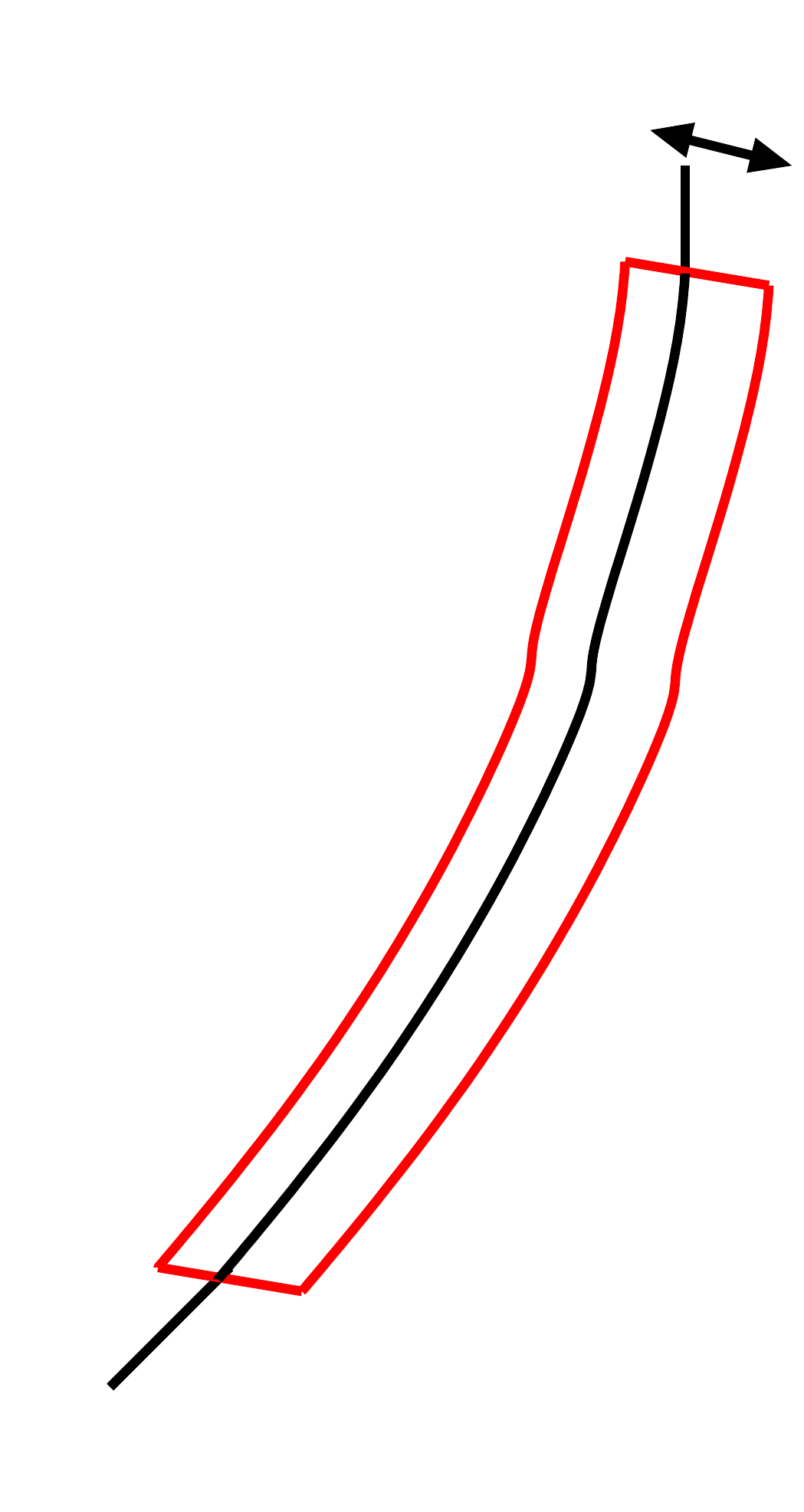_t}}
\end{center}\caption{${\mathcal T}_{C'\theta(\gamma_\nu)}$}
\label{fig1}
\end{figure}

\newsection{Proof of the refined  two-dimensional microlocal Kakeya-Nikodym estimates}

Let us now prove the estimates in \eqref{main}.  We shall follow arguments from \S 6 of \cite{MSS}.

We first note that if as in \eqref{2.7}, $\text{supp }f\subset B(x_0,\delta)$, 
and if 
\begin{equation}\label{3.1}\theta_0=\la^{-\frac12+\e_0}\end{equation}
with $\e_0>0$ fixed
$$\chi_\la f=\sum_{\nu \in {\mathbb Z}^2}\chi_\la \bigl( Q^\nu_{\theta_0}(x,D)f\bigr)+R_\la f,$$
where, if $c>0$ in \eqref{2.9} is small enough,   and $N=1,2,3,\dots$
$$\|R_\la f\|_{L^\infty}\lesssim \la^{-N}\|f\|_{L^2}.$$
Therefore, in order to prove \eqref{2.7}, it suffices to show that 
\begin{equation}\label{3.2}
\Bigl\|\sum_{\nu,\nu'\in {\mathbb Z}^2}\chi_\la Q^\nu_{\theta_0}f \, \chi_\la Q^{\nu'}_{\theta_0}f\Bigr\|_{L^2}\lesssim_{\e_0} \la^{\frac{\e_0}2} \|f\|_{L^2}\times \vertiii{f}_{MKN(\la,\e_0)}.
\end{equation}

We shall split the sum in the left based on the size of $|\nu-\nu'|$.  Indeed the left side of \eqref{3.2} is dominated by
\begin{equation}\label{3.3}
\Bigl\| \sum_\nu \bigl(\chi_\la Q^\nu_{\theta_0}f\big)^2\Bigr\|_{L^2}
+\sum_{\ell=1}^\infty \Bigl\| \sum_{|\nu-\nu'|\in [2^\ell, 2^{\ell+1})}\chi_\la Q^\nu_{\theta_0}f \, \chi_\la Q^{\nu'}_{\theta_0}f\Bigr\|_{L^2}.
\end{equation}

The square of the first term in \eqref{3.3} is
$$\sum_{\nu,\nu'} \int \bigl(\chi_\la Q^\nu_{\theta_0}f\bigr)^2 \, \overline{\bigl(\chi_\la Q^{\nu'}_{\theta_0}f\bigr)^2} \, dx.$$
Next we need an orthogonality result which is similar to Lemma 6.7 in \cite{MSS}, which says that
if $A$ is large enough we have
\begin{equation}\label{3.4}\sum_{|\nu-\nu'|\ge A}\left|\int \bigl(\chi_\la Q^\nu_{\theta_0}f\bigr)^2  \, \overline{\bigl(\chi_\la Q^{\nu'}_{\theta_0}f\bigr)^2} \, dx\right|
\lesssim_{\e_0,N} \la^{-N}\|f\|^4_{L^2}.\end{equation}
We shall postpone the proof of this result until the end of the section when we will have recorded the
information about the kernels of $\chi_\la Q^\nu_{\theta}$ that will be needed for the proof.

Since by \cite{SEig}
$$\|\chi_\la\|_{L^2\to L^4}=O(\la^{\frac18}),$$
if we use \eqref{3.4}
we conclude that
the first term in \eqref{3.3} is majorized  by \eqref{2.13} and \eqref{2.15} by
\begin{multline}\label{3.5}
\la^{\frac12}\sum_\nu \|Q^\nu_{\theta_0}f\|_{L^2}^2\, \|Q^\nu_{\theta_0}f\|_{L^2}^2 +\la^{-N}\|f\|_{L^2}^4
\lesssim \la^{\frac12}\|f\|_{L^2}^2 \times\sup_{\nu\in {\mathbb Z}^2}\|Q^\nu_{\theta_0}f\|_{L^2}^2+\la^{-N}\|f\|_{L^2}^4
\\
=\la^{\e_0}\|f\|_{L^2}^2 \times \la^{\frac12-\e_0}\sup_{\nu\in {\mathbb Z}^2}\|Q^\nu_{\theta_0}f\|_{L^2}^2+\la^{-N}\|f\|_{L^2}^4
\end{multline}
Therefore,  the first term in \eqref{3.3} satisfied the desired bounds.

Using \eqref{2.15} again, the proof of \eqref{main} and hence \eqref{2.7} would be complete if we could estimate the other terms in \eqref{2.11} and show that for
\begin{multline}\label{3.6}
\Bigl\| \sum_{|\nu-\nu'|\in [2^\ell, 2^{\ell+1})}\chi_\la Q^\nu_{\theta_0}f \, \chi_\la Q^{\nu'}_{\theta_0}f\Bigr\|_{L^2}^2
\\
\lesssim_{\e_0} 
\|f\|_{L^2}^2 \times (2^\ell\theta_0)^{-1}\sup_{\nu\in {\mathbb Z}^2}\|Q^\nu_{2^\ell\theta_0}f\|_{L^2}^2+\la^{-N}\|f\|_{L^2}^4.
\end{multline}
Note that if $2^\ell\theta_0\gg 1$ the left side of \eqref{3.6} vanishes and thus, as in \eqref{2.15}, we are just considering $\ell\in {\mathbb N}$
satisfying $1\le 2^\ell\le \la^{\frac12-\e_0}$.  In proving this, we may assume that $\ell$ is larger than a fixed constant, since the bound for small
$\ell$ (with an extra factor of $\la^{\e_0}$ in the right) follows from what we just did.  We can handle the sum over $\ell$ in \eqref{3.3} due to the fact that
the right side of \eqref{3.6} does not include a factor $\la^{\e_0}$.

We now turn to estimating the non-diagonal terms in \eqref{3.3}.
We first note that 
by \eqref{2.8}
\begin{equation*}
\chi_\la Q^\nu_{\theta_0}f=\sum_{\mu\in \Zt}\chi_\la Q^\mu_\theta Q^\nu_{\theta_0}f +O_N(\la^{-N}\|f\|_2),
\quad \text{if } \, \, \, \text{supp } f\subset B(x_0,\delta).
\end{equation*}

Furthermore,  if, as we may, we assume that $\ell\in {\mathbb N}$ is sufficiently large, then given $N_0\in {\mathbb N}$, there are fixed constants $c_0>0$ and $N_1<\infty$ (with $c_0$ depending only on $N_0$ and the cutoff $\beta$ in the definition
of these pseudodifferential operators) so that if
$$\theta_\ell = \theta_02^\ell,$$
then
\begin{multline}\label{3.7}
\sum_{|\nu-\nu'|\in [2^\ell, 2^{\ell+1})} \chi_\la Q_{\theta_0}^{\nu}f \, \,  \chi_\la Q^{\nu'}_{\theta_0}f
\\
=
\sum_{\{\mu,\mu'\in \Zt: \, N_0\le |\mu-\mu'|\le N_1\}}\sum_{|\nu-\nu'|\in [2^\ell, 2^{\ell+1})} 
\chi_\la Q^{\mu}_{c_0\theta_\ell}Q^\nu_{\theta_0}f \, \, \chi_\la Q^{\mu'}_{c_0\theta_\ell}Q^{\nu'}_{\theta_0}f
+O_N(\la^{-N}\|f\|_{L^2}^2),
\end{multline}
for each $N\in {\mathbb N}$.
Also, given $\mu\in \Zt$, there is a $\nu_0(\mu)\in \Zt$ so that
$$\|Q^{\mu}_{c_0\theta_\ell}Q^\nu_{\theta_0}f\|_{L^2}\le C_N\la^{-N}\|f\|_{L^2}, 
\quad \text{if } \, |\nu-\nu_0(\mu)|\ge C2^\ell,$$
for some uniform constant $C$.  If $|\mu-\mu'|\le N_1$, then $|\nu_0(\mu)-\nu_0(\mu')|\le C2^\ell$ for some uniform constant $C$.
Since $\|(Q^{\nu'}_\theta)^*\circ Q^\nu_\theta\|_{L^2\to L^2}=O(\la^{-N})$ for every $N$ if $|\nu-\nu'|$  is larger than a fixed constant, it follows that
\begin{multline}\label{3.8}
\iint \Bigl| \,  \sum_{|\nu_0(\mu)-\nu|, \, |\nu_0(\mu')-\nu'|\le C2^\ell} \, \sum_{|\nu-\nu'|\in [2^\ell, 2^{\ell+1})}Q^\nu_{\theta_0}f(x)Q^{\nu'}_{\theta_0}f(y)\Bigr|^2 \, dx dy
\\
\lesssim \sum_{|\nu-\nu_0(\mu)|, \, |\nu'-\nu_0(\mu)|\le C'2^\ell} \|Q^\nu_{\theta_0}f\|_{L^2}^2\, \|Q^{\nu'}_{\theta_0}f\|_{L^2}^2\, + \, O_N(\la^{-N}\|f\|_{L^2}^2),
\quad \text{if } \, |\mu-\mu'|\le C_0,
\end{multline}
for every $N$ if $C'$ is a sufficiently large but fixed constant.  Also, using  \eqref{2.13}, we deduce that
$$\sum_{\mu\in \Zt}\sum_{|\nu_0(\mu)-\nu|\le C'2^\ell}\|Q^\nu_{\theta_0}f\|^2_{L^2}\lesssim \|f\|_{L^2}^2.
$$
We clearly also have 
$$\sum_{|\nu(\mu)-\nu'|\le C'2^\ell}\|Q^{\nu'}_{\theta_0}f\|^2_{L^2}\lesssim \sup_{\mu\in \Zt} \|Q^\mu_{2^\ell \theta}f\|_{L^2}^2.$$
 Using these two inequalities and \eqref{3.8}, we deduce that
\begin{multline}\label{3.9}
\sum_{|\mu-\mu'|\le N_1}
\Bigl\| \,  \sum_{|\nu_0(\mu)-\nu|, \, |\nu_0(\mu')-\nu'|<C2^\ell} \, \sum_{|\nu-\nu'|\in [2^\ell, 2^{\ell+1})}Q^\nu_{\theta_0}f(x)Q^{\nu'}_{\theta_0}f(y)\Bigr\|_{L^2(dxdy)}
\\
\lesssim \|f\|_{L^2}\times \sup_{\mu\in \Zt} \|Q^\mu_{2^\ell \theta}f\|_{L^2} \, + \, O_N(\la^{-N}\|f\|^2_{L^2}).
\end{multline}

In addition to  \eqref{3.4} we shall need another orthogonality result whose proof we postpone until
the end of the section, which says that whenever $\theta$
is larger than a fixed positive multiple of $\theta_0$ in \eqref{3.1} and $N_1$ is fixed
\begin{multline}\label{3.10}
\Bigl|\int \bigl(\chi_\la Q^{\mu}_\theta g_1\, \chi_\la Q^{\mu'}_\theta g_2\bigr) \, \overline{\bigl(\chi_\la Q^{\tilde \mu}_\theta g_3 \,
\chi_\la Q^{\tilde \mu'}_\theta g_4\bigr)} \, dx\Bigr|
\\
\lesssim_{N}\la^{-N}\prod_{j=1}^4\|g_j\|_{L^2}, \quad \text{if  }
|\mu-\tilde \mu|+|\mu'-\tilde \mu'|\ge C, \, \, \text{and } \, \, |\mu-\mu'|, \, |\tilde \mu -\tilde \mu'|\le N_1,
\end{multline}
for every $N=1,2,\dots$,
with $C$ being a sufficiently large uniform constant (depending on $N_1$ of course).

Using \eqref{3.9} and \eqref{3.10}, we conclude that we would have \eqref{3.6} (and consequently \eqref{2.7}) if we could prove the following

\begin{proposition}\label{prop3.1}
Let 
\begin{equation}\label{3.11}
\Bigl(T^{\mu,\mu'}_{\la,\theta} F\Bigr)(x)=\iint \bigl(\chi_\la Q^{\mu}_\theta\bigr)(x,y) \, \bigl(\chi_\la Q^{\mu'}_\theta \bigr)(x,y') \, F(y,y')\, dydy',
\end{equation}
where
$$\bigl(\chi_\la Q^{\mu}_\theta\bigr)(x,y)$$
denotes the kernel of $\chi_\la Q^{\mu}_\theta$.
Then if $\delta>0$ is sufficiently small and if $\theta$ is larger than a fixed positive constant times $\theta_0$ in \eqref{3.1} and if $N_0\in {\mathbb N}$ is sufficiently large and if
$N_1>N_0$ is fixed, we have
\begin{multline}\label{3.12}
\bigl\| T^{\mu,\mu'}_{\la,\theta} F\bigr\|_{L^2(B(0,\delta))} 
\lesssim_{\e_0}
\theta^{-\frac12}\|F\|_{L^2},
\quad
\text{if  } N_0\le |\mu-\mu'|\le N_1, 
\\ \text{and } \, \, F(y,y')=0, \, \text{if } \, \, (y,y')\notin B(x_0,2\delta)\times B(x_0,2\delta).
\end{multline}
\end{proposition}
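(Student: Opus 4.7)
The plan is to apply a $TT^*$ reduction. Since $T^{\mu,\mu'}_{\la,\theta}$ is a linear operator from $L^2(\mathbb{R}^4)$ into $L^2(B(0,\delta))$, the estimate \eqref{3.12} is equivalent to
\[\|T^{\mu,\mu'}_{\la,\theta}(T^{\mu,\mu'}_{\la,\theta})^*\|_{L^2\to L^2}\lesssim_{\e_0}\theta^{-1},\]
and a direct computation shows that the kernel of $TT^*$ factors as the pointwise product
\[[TT^*](x,x')=L^\mu(x,x')\cdot L^{\mu'}(x,x'),\qquad L^\mu(x,x'):=\int (\chi_\la Q^\mu_\theta)(x,y)\,\overline{(\chi_\la Q^\mu_\theta)(x',y)}\,dy,\]
i.e.\ the kernel of $\chi_\la Q^\mu_\theta(\chi_\la Q^\mu_\theta)^*$.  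The task thus reduces to a Schur-product estimate: showing that the operator whose integral kernel is $L^\mu\cdot L^{\mu'}$ has $L^2\to L^2$ norm $\lesssim_{\e_0}\theta^{-1}$ on functions supported in $B(0,\delta)$.

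To set this up, I would extract a clean oscillatory representation.  Combining \eqref{2.1}--\eqref{2.6} with the fact that $Q^\mu_\theta\in S^0_{1/2+\e_0,1/2-\e_0}$ has symbol microlocalized to a $\theta$-tube about the cosphere lift of $\gamma_\mu$, a standard stationary phase in the momentum variable gives
\[(\chi_\la Q^\mu_\theta)(x,y)=\la^{\frac12}e^{i\la\psi(x,y)}\,a^\mu(x,y)+O(\la^{-N}),\]
where $a^\mu$ is an order-zero symbol supported, for $x\in B(0,\delta)$, in $y\in\mathcal{T}_{C\theta}(\gamma_\mu)$.  Consequently
\[L^\mu(x,x')=\la\int e^{i\la[\psi(x,y)-\psi(x',y)]}a^\mu(x,y)\,\overline{a^\mu(x',y)}\,dy+O(\la^{-N}),\]
and similarly for $L^{\mu'}$.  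Gauss's lemma ($|\nabla_y d_g(x,y)|=1$) is the key geometric input: it identifies $\nabla_y[\psi(x,y)-\psi(x',y)]$ with the difference of two unit vectors based at $y$, transverse to the geodesic through $y$ and of size $\sim|(x-x')_\perp|$.

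The core of the proof is then the Schur-product bound, which is a technical variation on H\"ormander's oscillatory integral argument from \cite{H}.  The $\theta^{-1}$ gain arises from the transversality of the two tubes: by the hypothesis $N_0\le|\mu-\mu'|\le N_1$, the geodesic directions of $\gamma_\mu$ and $\gamma_{\mu'}$ differ by an angle $\sim\theta$, and the Jacobian of the sum map $(\omega,\omega')\mapsto\omega+\omega'$ from the two $\theta$-arcs on the unit cosphere equals $\sin\angle(\gamma_\mu,\gamma_{\mu'})\sim\theta$.  On the Fourier side this makes the relevant convolution of angular arcs have $L^\infty$-density $\lesssim\theta^{-1}$; on the space side, the same fact appears as a joint $\theta^{-1}$ gain in a bilinear stationary phase in $(y,y')$, once the required non-degeneracy of the mixed Hessian of the phase on the support of the amplitude is verified.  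The main obstacle is that one cannot simply quote \cite{H} as a black box, since the amplitudes carry a nontrivial $\theta$-localization and the phase $\psi=d_g$ is a Riemannian rather than Euclidean distance; the resolution is to adapt H\"ormander's integration-by-parts and stationary phase scheme using Gauss's lemma to produce the necessary uniform-in-$\theta$ non-degeneracy bounds on the Hessian.  Combining this with the orthogonality estimates \eqref{3.4} and \eqref{3.10} already in place yields the claimed $\theta^{-1}$ bound for $TT^*$ and hence \eqref{3.12}.
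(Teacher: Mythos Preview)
Your $TT^*$ reduction with the factorization $[TT^*](x,x')=L^\mu(x,x')\cdot L^{\mu'}(x,x')$ is correct and is a clean structural observation, but the outline diverges from the paper's route and stops short of the actual estimate.

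The paper does \emph{not} go through $TT^*$.  It first proves a kernel lemma (Lemma~\ref{kerlemma}) giving
\[
(\chi_\la Q^\mu_\theta)(x,y)=\la^{1/2}e^{i\la d_g(x,y)}a_{\mu,\theta}(x,y)+O(\la^{-N}),
\]
with the crucial anisotropic amplitude bounds \eqref{osc1}--\eqref{osc2}: transverse derivatives lose $\theta^{-1}$, but derivatives \emph{along} $\gamma_\mu$ do not.  In Fermi normal coordinates about $\gamma_\mu$ the product kernel becomes $\la\,e^{i\la(\psi(x,y)+\psi(x,y'))}b_\mu(x;y,y')$ with the support and angle properties \eqref{3.18}--\eqref{3.19}.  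After freezing $y_2,y_2'$ and changing variables $(y_1,y_1')\mapsto(s,t)=(y_1+y_1',y_1-y_1')/2$, the proposition reduces to the $2$D oscillatory integral bound of Lemma~\ref{lemma3.2}, which is then proved by $T^*T$: the kernel $K(s,t;s',t')$ is controlled by integrating by parts in $x_1$ when $|s-s'|\gtrsim|t-t'|$ (using the nondegeneracy in \eqref{3.21}), and in $x_2$ when $|t-t'|\gg|s-s'|$ (using \eqref{3.22}, which is exactly where the separation angle $\sim\theta$ enters and produces the two-scale bound \eqref{3.25}).

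Your sketch proposes instead to bound the operator with kernel $L^\mu\cdot L^{\mu'}$ directly.  Two cautions.  First, the Fourier-side heuristic (convolution of two $\theta$-arcs has $L^\infty$ density $\lesssim\theta^{-1}$) gives the bound only in the flat constant-coefficient model, where each $L^\sigma$ is a convolution kernel and Plancherel applies; in the curved setting there is no such shortcut.  Second, a literal Schur test on $|L^\mu|\,|L^{\mu'}|$ discards the oscillation between the two factors and does not recover $\theta^{-1}$---already for $\theta\sim1$ it loses a power of $\la$.  So one is forced back to a genuine integration-by-parts argument on the full oscillatory integral, which is precisely what the paper carries out on the $T^*T$ side after the $(s,t)$ change of variables, where the anisotropic regularity \eqref{osc2} and the conditions \eqref{3.21}--\eqref{3.23} coming from Gauss' lemma make the bookkeeping tractable.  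Your outline names these ingredients but does not execute the estimate.

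Finally, the orthogonality estimates \eqref{3.4} and \eqref{3.10} play no role in the proof of Proposition~\ref{prop3.1}; they are used earlier, in the reduction \eqref{3.6}--\eqref{3.9} \emph{to} the proposition, and should not appear here.
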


To prove this we shall need some information about the kernel of $\chi_\la Q^\mu_\theta$.  One thing will be that,
by \eqref{2.10}, the kernel is highly concentrated near the geodesic in $M$
\begin{equation}\label{geod}
\gamma_\mu =\{x_\mu(t): \, -2\le t\le 2, \Phi_t(x_\mu,\xi_\mu)=(x_\mu(t),\xi_\mu(t)), \, \, \theta^{-1}\varphi(x_\mu,\xi_\mu)+\mu=0\, \},
\end{equation}
which corresponds to $Q^\mu_\theta$.  We also will exploit the oscillatory behavior of the kernel near $\gamma_\mu$.  

Specifically, we require the following

\begin{lemma}\label{kerlemma}
Let $\theta \in [C_0\la^{-\frac12+\e_0}, \frac12]$, where $C_0$ is a sufficiently large fixed constant, and, as above,
$\e_0>0$.  Then there is a uniform constant $C$ so that for each $N=1,2,3,\dots$ we have
\begin{equation}\label{supp}
|(\chi_\la Q^\mu_\theta)(x,y)|\le C_N\la^{-N}, \quad\text{if  }\, 
x\notin {\mathcal T}_{C\theta}(\gamma_\mu),
\, \, \text{or } \, y\notin {\mathcal T}_{C\theta}(\gamma_\mu).
\end{equation}
Furthermore, 
\begin{equation}\label{osc}
\bigl(\chi_\la Q^\mu_\theta\bigr)(x,y)=\la^{\frac12}e^{i\la d_g(x,y)}a_{\mu,\theta}(x,y)+O_N(\la^{-N}),
\end{equation}
where one has the uniform bounds
\begin{equation}\label{osc1}
|\nabla^\alpha_y a_{\mu,\theta}(x,y)|\le C_\alpha \theta^{-|\alpha|},
\end{equation}
and
\begin{equation}\label{osc2}
|\partial_t^ja_{\mu,\theta}(x,x_\mu(t))|\le C_j, \, \, x\in \gamma_\mu,
\end{equation}
if, as in \eqref{geod}, $\{x_\mu(t)\}=\gamma_\mu$.
\end{lemma}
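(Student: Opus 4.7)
The first step is to express the kernel as an explicit oscillatory integral. Composing the formula \eqref{2.1} for $\chi_\la$ with the Fourier representation of the pseudodifferential operator $Q^\mu_\theta(x,D)$ and rescaling the frequency variable as $\xi=\la\eta$ (which is justified since $\Upsilon(|\xi|/\la)$ localizes $|\xi|\sim\la$), one obtains
\begin{equation*}
(\chi_\la Q^\mu_\theta)(x,y)=\la^{5/2}(2\pi)^{-2}\iint e^{i\la\Phi(x,y;z,\eta)}\,a_\la(x,z)\,\alpha(z)\,\beta\!\left(\theta^{-1}\varphi(z,\eta)+\mu\right)\Upsilon(|\eta|)\,dz\,d\eta,
\end{equation*}
with phase $\Phi(x,y;z,\eta)=d_g(x,z)+\langle z-y,\eta\rangle$. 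Before doing stationary phase, I would handle the support statement \eqref{supp}: on the support of the integrand, $(z,\eta)$ lies in an $O(\theta)$-neighborhood (in phase space) of the geodesic orbit determined by $\mu$. An $\eta$-integration by parts based on $\partial_\eta\Phi=z-y$ forces $z\approx y$ up to $O(\la^{-1}\theta^{-1})\ll\theta$ losses (where the $\theta^{-1}$ cost of $\partial_\eta\beta$ is absorbed, using $\theta\ge C_0\la^{-1/2+\e_0}$). A $z$-integration by parts based on $\partial_z\Phi=\nabla_z d_g(x,z)+\eta$ then forces $\eta\approx-\nabla_y d_g(x,y)$. Combining these with the phase-space tube condition on $Q^\mu_\theta$ shows that $(y,-\nabla_y d_g(x,y))$ must lie within $O(\theta)$ of the orbit $\{\Phi_t(x_\mu,\xi_\mu)\}$, which, projecting to the base, puts $y\in\mathcal T_{C\theta}(\gamma_\mu)$; and since $\nabla_y d_g(x,y)$ points along the geodesic from $x$ to $y$ (Gauss' lemma), the same constraint forces $x\in\mathcal T_{C\theta}(\gamma_\mu)$.

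Next I would prove \eqref{osc}. The phase $\Phi$ has a unique nondegenerate critical point $(z_c,\eta_c)=(y,-\nabla_y d_g(x,y))$ in the support of the amplitude (the region where $d_g(x,z)\in(1,2)$ makes $\nabla_z d_g$ smooth, and $|\eta_c|=1$ by Gauss' lemma, landing in $\Upsilon\equiv 1$). The mixed Hessian is
\begin{equation*}
\Phi''=\begin{pmatrix}\nabla_z^2 d_g(x,z) & I\\ I & 0\end{pmatrix},
\end{equation*}
whose determinant is $\pm 1$, so standard stationary phase in four variables produces the factor $\la^{-2}$, yielding the overall $\la^{5/2-2}=\la^{1/2}$ and the phase $e^{i\la d_g(x,y)}$. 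The symbol $a_{\mu,\theta}(x,y)$ is, to leading order, $a_\la(x,y)\alpha(y)\beta(\theta^{-1}\varphi(y,-\nabla_y d_g(x,y))+\mu)$ times a nonvanishing smooth Hessian factor, plus stationary-phase remainders. The type $S^0_{1/2+\e_0,1/2-\e_0}$ structure of $Q^\mu_\theta$ ensures that successive symbol corrections lose only powers of $\la^{-2\e_0}$, so this expansion is asymptotic.

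For the derivative bound \eqref{osc1}, each $\partial_y$ of $a_{\mu,\theta}$ hits either a smooth bounded function of $y$ (giving an $O(1)$ term) or the factor $\beta(\theta^{-1}\varphi(\cdot)+\mu)$; in the latter case the chain rule produces a factor $\theta^{-1}$ times a bounded derivative of $\varphi$ composed with the smooth map $y\mapsto(y,-\nabla_y d_g(x,y))$. Iterating gives the $\theta^{-|\alpha|}$ bound.

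The main obstacle, and the most delicate estimate, is \eqref{osc2}. Here I would use Gauss' lemma crucially: when both $x$ and $x_\mu(t)$ lie on $\gamma_\mu$, the covector $-\nabla_y d_g(x,y)\big|_{y=x_\mu(t)}$ equals $\xi_\mu(t)$, i.e.\ the tangent covector to $\gamma_\mu$ at $x_\mu(t)$. Thus the argument of $\beta$ becomes $\theta^{-1}\varphi(x_\mu(t),\xi_\mu(t))+\mu$. But by construction $\varphi$ is constant along the geodesic flow (it was defined to be a first integral), so $\varphi(x_\mu(t),\xi_\mu(t))=\varphi(x_\mu,\xi_\mu)=-\theta\mu$ for all $t$ in the range of interest. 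Consequently $\partial_t$ annihilates the dangerous $\beta$-factor entirely, and one is left differentiating only the smooth $t$-independent-scale pieces $a_\la$, $\alpha$, $\Upsilon$ and the Hessian determinant, each of which contributes $O(1)$ per $t$-derivative. This yields the uniform bound $|\partial_t^j a_{\mu,\theta}(x,x_\mu(t))|\le C_j$ with no $\theta^{-j}$ loss, which is exactly the gain needed in the subsequent oscillatory integral argument.
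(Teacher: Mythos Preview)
Your argument is correct, and the central geometric insight for \eqref{osc2}—that moving $y$ along $\gamma_\mu$ keeps the critical covector on the lifted orbit, where $\varphi$ is constant—is exactly the point. But the technical route differs from the paper's.

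The paper does not start from the reduced formula \eqref{2.1}. Instead it goes back to the FIO parametrix for the half-wave group, writing $\chi_\la(x,y)$ with phase $S(t,x,\xi)-y\cdot\xi+t\la$, where $S$ is a generating function for the geodesic flow. It composes with $Q^\mu_\theta$ at the symbol level via the Kohn--Nirenberg calculus, obtaining an amplitude $q(t,x,y,\xi)$ with the anisotropic bounds $|\partial^{j}_{x_1}\partial_{x_2}^k\partial^l_{\xi_1}\partial^m_{\xi_2} q|\lesssim(1+|\xi|)^{j(\frac12-\delta)-l(\frac12+\delta)-m}$ in Fermi normal coordinates about $\gamma_\mu$, and then does stationary phase in $(t,\xi)$. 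The support assertion \eqref{supp} comes from nonstationary phase in $\xi$ using the generating-function relation $\Phi_t(x,\nabla_xS)=(\nabla_\xi S,\xi)$. For \eqref{osc2}, the paper's version of your flow-invariance observation is that at the stationary point with $x,y\in\gamma_\mu$ one has $\xi_1=0$, so varying $y$ along the geodesic only probes the $\xi_2$-direction of the symbol, where by the anisotropic bound there is no $\theta^{-1}$ loss.

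What each approach buys: yours is lighter on machinery (no FIO parametrix, no Kohn--Nirenberg composition), is coordinate-free, and makes the role of the first integral $\varphi$ completely transparent. The paper's route, by contrast, records the full directional symbol estimate in Fermi coordinates, which gives a bit more than is strictly needed here (anisotropic control at every point of the tube, not just on $\gamma_\mu$). One small point worth making explicit in your write-up: the higher-order stationary-phase terms in $a_{\mu,\theta}$ involve factors $\beta^{(k)}\bigl(\theta^{-1}\varphi(y,\eta_c)+\mu\bigr)$, and your constancy argument applies to these just as well as to the leading $\beta$, so \eqref{osc2} holds for the full expansion, not only the principal term.
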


\begin{proof}
To prove the lemma it is convenient to choose Fermi normal coordinates so that the
geodesic becomes the segment $\{(0,s): \, |s|\le 2\}$.  
Let us also write $\theta$ as
$$\theta =\la^{-\frac12+\delta},$$
where, because of our assumptions $c_1\le \delta\le 1/2$ for an
appropriate $c_1>0$.
Then, in these coordinates
$Q^\mu_\theta(x,D)$ has symbol satisfying
\begin{equation}\label{e1}
q^\mu_\theta(x,\xi)=0, \quad \text{if } \, \, 
\bigl|\xi_1/|\xi|\, \bigr| \ge C\la^{-\frac12+\delta},
\, \,  \, |x_1|\ge C\la^{-\frac12+\delta} \, \, \,
\text{or } \, \, |\xi|/\la \notin [C^{-1},C],
\end{equation}
for some uniform constant $C$, and, additionally,
\begin{equation}\label{e2}
|\partial^{j}_{x_1}\partial_{x_2}^k\partial^l_{\xi_1}\partial^m_{\xi_2} q^\mu_\theta(x,\xi)|\le C_{ j, k, l, m}
(1+|\xi|)^{j(\frac12-\delta)-l(\frac12+\delta)-m}.
\end{equation}

Next we recall that $\chi_\la =\rho(\la-\sqrt{-\Delta_g})$ where $\rho\in {\mathcal S}(\R)$
satisfies $\hat \rho \subset (1,2)$ and that the injectivity radius of $(M,g)$ is
ten or more.  Therefore, we can use Fourier integral parametrices for the
wave equation to see that
the kernel of $\chi_\la$ is of the form
$$\chi_\la(x,y)=\iint e^{iS(t,x,\xi)-iy\cdot \xi +it\la}\Hat\rho(t)\alpha(t,x,y,\xi)
\, d\xi dt,
$$
where $\alpha \in S^1_{1,0}$ and $S$ is homogeneous of degree one in $\xi$ 
is a generating function for the canonical relation for the half wave group $e^{-it\sqrt{-\Delta_g}}$.
Thus,
\begin{equation}\label{e3}
\partial_tS(t,x,\xi)=-p(x,\nabla_x S(t,x,\xi)), \quad S(0,x,\xi)=x\cdot \xi,
\end{equation}
and
\begin{equation}\label{e4}
\Phi_t(x,\nabla_xS)=(\nabla_\xi S,\xi), 
\end{equation}
with, as before, $\Phi_t$ denoting geodesic flow on the cotangent bundle.
Furthermore,
\begin{equation}\label{e5}
\text{det} \, \frac{\partial S}{\partial x\partial \xi} \ne 0.
\end{equation}

By \eqref{e1}-\eqref{e2} and the proof of the Kohn-Nirenberg theorem,
we have that 
\begin{align}\label{e6}
\bigl(\chi_\la Q^\mu_\theta\bigr)(x,y)
&=\iint e^{iS(t,x,\xi)-iy\cdot \xi +i\la t}\Hat \rho(t)
q(t,x,y,\xi) \, d\xi dt +O(\la^{-N}),
\\
&=\la^2\iint e^{i\la(S(t,x,\xi)-y\cdot \xi + t)}\Hat \rho(t)
q(t,x,y,\la\xi) \, d\xi dt +O(\la^{-N}), \notag
\end{align}
where for all $t$ in the support of $\Hat \rho$,
\begin{equation}\label{e7}
q(t,x,y,\xi)=0 \, \, \,
\text{if } \, \, |\xi_1/|\xi|\, |\ge C\la^{-\frac12+\delta},
\, \, \, |x_1|\ge C\la^{-\frac12+\delta}, \, \, \,
\text{or } \, |\xi|/\la \notin [C^{-1},C],
\end{equation}
with $C$ as in \eqref{e2}, and, also
\begin{equation}\label{e8}
|\partial^{j}_{x_1}\partial_{x_2}^k\partial^l_{\xi_1}\partial^m_{\xi_2} q(t,x,y,\xi)|\le C_{ j, k, l, m}
(1+|\xi|)^{j(\frac12-\delta)-l(\frac12+\delta)-m}.
\end{equation}

Let us now prove \eqref{supp}.  We have the assertion if 
$y\notin {\mathcal T}_{C\la^{-\frac12+\delta}}(\gamma_\mu)$ by \eqref{e7}.  To prove
that remaining part of \eqref{e7} which
says that this is also the case when $x$ is not in such a tube, we note that
by \eqref{e4}, if $d_g(x_0,y_0)=t_0$ and $x_0,y_0\in \gamma_\mu$, then
$$\nabla_\xi\bigl(S(t_0,x_0,\xi)-y_0\cdot \xi\bigr)=0 \quad
\text{if } \, \xi_1=0. $$
By \eqref{e5} we then have
$$|\nabla_\xi(S(t_0,x,\xi)-y_0\cdot \xi)|\approx d_g(x,x_0)
\quad \text{if } \, \xi_1=0. $$
We deduce from this that if $|\xi_1|/|\xi|\le C\la^{-\frac12+\delta}$, $|y_1|\le C\la^{-\frac12+\delta}$ and $|\xi|\in [C^{-1},C]$, then there is
a $c_0>0$ and a $C_0<\infty$ so that
$$|\nabla_\xi(S(t_0,x,\xi)-y\cdot \xi)|\ge c_0\la^{-\frac12+\delta}
\quad \text{if } \, \, x\notin {\mathcal T}_{C_0\la^{-\frac12+\delta}}(\gamma_\mu). $$
From this we obtain the remaining part of \eqref{supp} via a simple
integration by parts argument if we use the support properties \eqref{e7}
and size estimates \eqref{e8} of $q(t,x,y,\xi)$.  We note that every
time we integrate by parts in $\xi$ we gain by $\la^{-2\delta}$
which implies \eqref{supp} since $q$ vanishes unless $|\xi|\approx \la$
and $\delta$ is bounded below by a fixed positive constant.

To finish the proof of the lemma and obtain \eqref{osc}-\eqref{osc2},
we note that if we let
$$\Psi(t,x,y,\xi)=S(t,x,\xi)-y\cdot \xi +t$$
denote the phase function of the second oscillatory integral in \eqref{e6},
then at a stationary point where
$$\nabla_{\xi,t}\Psi=0, $$
we must have $\Psi=d_g(x,y)$, due to the fact that $S(t,x,\xi)-y\cdot \xi=0$ and $t=d_g(x,y)$ at points where the $\xi$-gradient vanishes.
Additionally, 
it is not difficult to check that the mixed Hessian of the phase satisfies
$$\text{det }\Bigl(\frac{\partial^2\Psi}{\partial(\xi,t)\partial(\xi,t)}\Bigr)
\ne 0$$
on the support of the integrand.
This follows from the proof of \cite[Lemma 5.1.3]{SBook}.  Moreover, since, 
modulo $O(\la^{-N})$ error terms $(\chi_\la Q^\mu_\theta)(x,y)$ equals
\begin{equation}\label{e9}\la^2\iint e^{i\la \Psi}\Hat \rho(t) \, q(t,x,y,\la\xi) \, d\xi dt,
\end{equation}
we obtain \eqref{osc}-\eqref{osc1} by the proof of this result if we
use stationary phase and \eqref{e7}-\eqref{e8}.  Indeed, by \eqref{e4},
\eqref{e9} has a stationary phase expansion (see 
\cite[Theorem 7.7.5]{H1})
where the leading term is 
a fixed constant times
\begin{equation}\label{e10}\la^{\frac12} e^{i\la t}q(t,x,y,\la\xi), \quad 
\text{if } \, t=d_g(x,y) \, \, \text{and } \,
\Phi_{-t}(y,\xi)=(x,\nabla_xS(t,x,\xi)).
\end{equation}
From this, we see that the leading term in the asymptotic expansion
must satisfy \eqref{osc1}, and subsequent terms in the expansion
will satisfy better estimates where the right hand side involves increasing
negative powers of $\lambda^{2\delta}$ (by \cite[(7.7.1)]{H1} and \eqref{e8}), from which we deduce that \eqref{osc1}
must be valid.  Since $\xi_1=0$ and $p(y,\xi)=1$ (by \eqref{e4})
in \eqref{e10} when $x,y\in \gamma_\mu$, we similarly deduce
from \eqref{e8} that the leading term in the stationary phase
expansion must satisfy \eqref{osc2}, and since the other terms satisfy
better bounds involving increasing powers of $\la^{-2\delta}$, we
similarly obtain \eqref{osc2}, which completes the proof of the lemma.
\end{proof}

Let us now collect some simple consequences of Lemma \ref{kerlemma}.  First, in addition to \eqref{supp}, the kernel $(\chi_\la Q^\mu_\theta)(x,y)$
is also $O(\la^{-N})$ unless the distance between $x$ and $y$ is comparable to one
by \eqref{2.6}.  From this we deduce  that if $N_0\in {\mathbb N}$ is sufficiently large
\begin{multline}\label{3.13}\bigl(\chi_\la Q^{\mu}_\theta\bigr)(x,y) \, \bigl(\chi_\la Q^{\mu'}_\theta\bigr)(x,y')=O(\la^{-N}),
\\
\text{unless } \, \, \text{Angle}(x;y,y') \in [\theta, C_2\theta], \, \, \text{and } \, \, x,y,y'\in {\mathcal T}_{C_2\theta}(\gamma_\mu),
\, \, \, \text{if } \, |\mu-\mu'|\in [N_0,N_1],
\end{multline}
if $\text{Angle}(x,y,y')$ denotes the angle at $x$ of the geodesic connecting $x$ and $y$ and the one connecting $x$ and $y'$, and where $C_2=C_2(N_1)$.

This is because in this case, if $x\in {\cal T}_{C\theta}(\gamma_\mu)\cap {\cal T}_{C\theta}(\gamma_{\mu'})$
then the tubes must be disjoint at a distance bounded below by a fixed positive multiple of  $\theta$ if $N_0$ is large enough, and in this region their separation is bounded by a fixed
constant times $\theta$ if $N_1$ is fixed.  See the figure below.


\begin{figure}
\begin{center}
\resizebox{6cm}{!}{\input{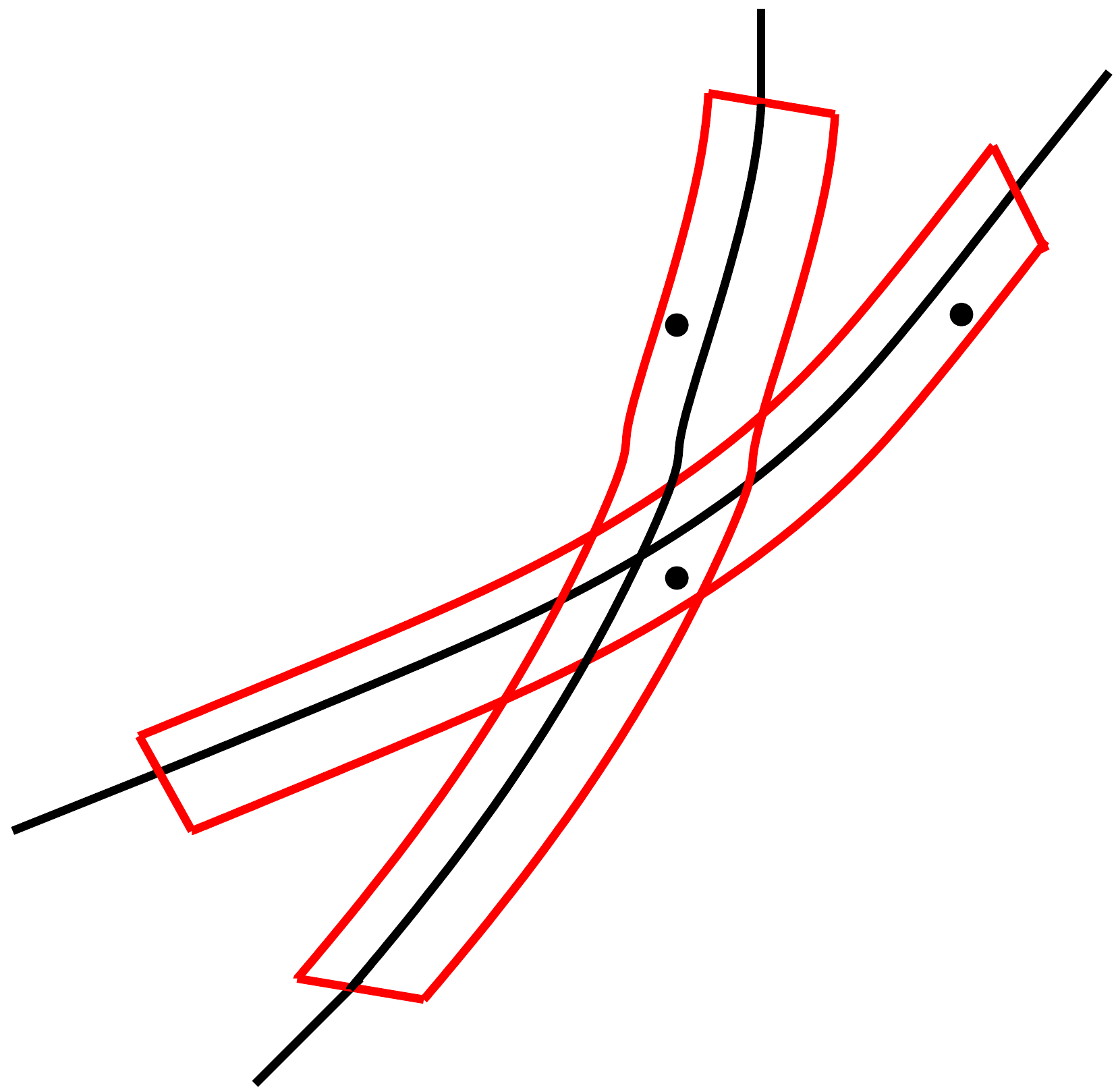_t}}
\end{center}
\caption{$\theta$-tubes intersecting at angle $\ge N_0 \theta$}
\label{fig2}
\end{figure}

To exploit this key fact, as above, let us choose Fermi normal coordinates about $\gamma_{\mu}$ so that the geodesic becomes the segment
$\{(0,s): \, |s|\le 2\}$.  Then, as in \eqref{2.5}, let 
$$\psi(x; y)=d_g\bigl((x_1,x_2),(y_1,y_2)\bigr)$$
be the Riemannian distance function  written in these coordinates.  Then if $x,y,y'$ are close to this segment and if the distance between $x$ and $y$ and $x$
and $y'$ are both comparable to one and if, as well, $y$ is close to $y'$, it follows from Gauss' lemma that
\begin{equation}\label{3.14}\text{Angle}\bigl(x;(y_1,y_2),(y'_1,y'_2)\bigr)
\approx \bigl| \frac{\partial}{\partial y_1}\frac{\partial}{\partial x_2}\psi(x,y) \, - \,  \frac{\partial}{\partial y_1}\frac{\partial}{\partial x_2}\psi(x,y')\bigr|.
\end{equation}
As a result, by \eqref{3.13}, we have that there must be a constant $c_0>0$ so that
\begin{multline}\label{3.15}\bigl(\chi_\la Q^{\mu}_\theta\bigr)(x,y) \, \bigl(\chi_\la Q^{\mu'}_\theta\bigr)(x,y')=O(\la^{-N}),
\\
\text{if } \, \, \Bigl| \frac{\partial}{\partial y_1}\frac{\partial}{\partial x_2}\psi(x,y) \, - \,  \frac{\partial}{\partial y_1}\frac{\partial}{\partial x_2}\psi(x,y')\Bigr|
\le c_0 \theta, \, \, \text{and } \, \, |\mu-\mu'|\in [N_0,N_1],
\end{multline}
with, as above, $N_0\in {\mathbb N}$ sufficiently large and $N_1$ fixed.
Another consequence of Gauss' lemma is that if $x$ and $y$, as in \eqref{3.14} are close to this segment and a distance which is comparable to one
from each other, 
then
\begin{equation}\label{3.16}
\frac\partial{\partial x_1}\frac\partial{\partial y_1}\psi(x,y)\ne 0.
\end{equation}
We shall also need to make use of the fact that, in these Fermi normal coordinates, we also have
 \begin{multline}\label{3.17}
 \frac\partial{\partial x_2}\frac\partial{\partial y_1}\psi((0,x_2),(0,y_2))=
 \frac{\partial}{\partial x_1}\psi((0,x_2), (0,y_2))=0, 
 \\ 
 \, \, \text{if } \, \, d_g((0,x_2),(0,y_2))\approx 1.
 \end{multline}

Next, by \eqref{osc}-\eqref{osc2},
modulo terms which are $O(\la^{-N})$ we can write
$$\bigl(\chi_\la Q^\mu_\theta\bigr)(x,y)\, \bigl(\chi_\la Q^{\mu'}_\theta\bigr)(x,y')= \la \, e^{i\la(\psi(x,y)+\psi(x,y'))}b_\mu(x;y,y'),$$
where, by \eqref{3.13} and \eqref{3.15},
\begin{multline}\label{3.18}
b_\mu(x;y,y')=0, \text{if } \, d_g(x,y) \, \, \text{or } \, d_g(x,y')\notin [1,2], \quad \text{or } \, \, |x_1|+|y_1|+|y'_1|\ge c_0^{-1}\theta,
\\
\text{or } \, \, \Bigl| \frac{\partial}{\partial y_1}\frac{\partial}{\partial x_2}\psi(x,y) \, - \,  \frac{\partial}{\partial y_1}\frac{\partial}{\partial x_2}\psi(x,y')\Bigr|
\le c_0 \theta ,
\end{multline}
and, since we are working in Fermi normal coordinates
\begin{equation}\label{3.19}
\Bigl| \, \frac{\partial^j}{\partial x_1^j} \frac{\partial^k}{\partial x_2^k} \, b_\mu(x,y,y')\, \Bigr|\le C_0 \theta^{-j}, \quad 0\le j,k\le 3.
\end{equation}
The constants $C_0$ and $c_0$ can be chosen to be independent of $\mu\in \Zt$ and $\theta\ge \la^{-\frac12+\e_0}$ if $\e_0>0$.  But then, by \eqref{3.18} and \eqref{3.19}
if $y_2$ and $y_2'$ are fixed and close to one another, and if we set
\begin{equation*}
\Psi(x;s,t)=\psi\bigl(x,(s+t,y_2)\bigr)+\psi\bigl(x,(s-t,y_2')\bigr), \quad \text{and } \, b(x;s,t)=b_\mu(x;s+t,y_2,s-t,y_2'),
\end{equation*}
then we have that there is a fixed constant $C$ so that
\begin{multline}\label{3.20}
b(x;s,t)=0, \quad \text{if  }\, \, |x_1|+|s|+|t|\ge C\theta,
\\
\text{and } \, \, 
\Bigl| \, \frac{\partial^j}{\partial x_1^j} \frac{\partial^k}{\partial x_2^k} \, b(x;s,t)\, \Bigr|\le C \theta^{-j}, \quad 0\le j,k\le 3,
\end{multline}
while, by \eqref{3.16} and \eqref{3.17}
\begin{multline}\label{3.21}
\frac\partial{\partial x_2}\frac\partial{\partial s}\Psi(0,x_2;0,0)=
\frac\partial{\partial x_2}\frac\partial{\partial t}\Psi(0,x_2;0,0)=
\frac\partial{\partial x_1}\Psi(0,x_2;0,0)=0, 
\\
\text{but } \, \, 
\frac\partial{\partial x_1}\frac\partial{\partial s}\Psi(0,x_2;0,0)\ne 0, \quad \text{if } \, \, b(0,x_2;0,0)\ne 0,
\end{multline}
and, moreover, by \eqref{3.18},
\begin{equation}\label{3.22}
|\frac\partial{\partial x_2}\frac\partial{\partial t}\Psi(x; s,t)|\ge c\theta,  
\quad \text{if } \, \, b(x;s,t)\ne 0.
\end{equation}
Also, if, as we may, because of the support assumption in \eqref{3.12}, we assume that $|y_2-y_2'|\le  \delta$ then
\begin{equation}\label{3.23}
\Bigl|\frac\partial{\partial x_1}\frac\partial{\partial t}\Psi(x;s,0)\Bigl|\le C\delta, \quad \text{if } \, b(x;s,t)\ne 0,
\end{equation}
since the quantity in the left vanishes identically when $y_2=y_2'$.

Another consequence of Gauss' lemma is that if $y,y',x$ are close to the 2nd coordinate axis and if the distance between both $x$ and both $y$ and $y'$ are comparable
to one then if $\theta$ above is bounded below the $2\times 2$ mixed Hessian of the function $(x;y_1,y_1')\to \psi(x,y)+\psi(x,y')$ has nonvanishing determinant.  Thus, in
this case \eqref{3.12} just follows from H\"ormander's non-degenerate $L^2$-oscillatory integral lemma in \cite{H} (see \cite[Theorem 2.1.1]{SBook}).  Therefore,
it suffices to prove \eqref{3.12} when $\theta$ is bounded above by a fixed positive constant, and so Proposition~\ref{prop3.1} and hence
Theorem~\ref{theorem1.1} are a consequence of the following

\begin{lemma}\label{lemma3.2}
Suppose that $b\in C_0^\infty({\mathbb R}^2\times {\mathbb R}^2)$ vanishes when $|(s,t)|\ge \delta$.
Then if $\Psi \in C^\infty({\mathbb R}^2\times {\mathbb R}^2)$
is real and \eqref{3.20}--\eqref{3.23} are valid there is a uniform constant $C$ so that if $\delta>0$ and $\theta>0$ are smaller than  a fixed positive constant 
and
$$T_\la F(x)=\iint e^{i\la\Psi(x;s,t)}b(x;s,t) F(s,t)\, ds dt,$$
then we have
\begin{equation}\label{3.24}
\|T_\la F\|_{L^2({\mathbb R}^2)}\le C\la^{-1}\theta^{-\frac12}\|F\|_{L^2({\mathbb R}^2)}.
\end{equation}
\end{lemma}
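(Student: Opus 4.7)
The plan is to prove the lemma by a $T_\lambda^*T_\lambda$ argument, with Schur's test applied to the kernel
\[
K(s,t;\tilde s,\tilde t)=\iint e^{i\lambda[\Psi(x;\tilde s,\tilde t)-\Psi(x;s,t)]}\,
b(x;\tilde s,\tilde t)\,\overline{b(x;s,t)}\,dx,
\]
where, by \eqref{3.20}, the $x$-integration is effectively over $|x_1|\lesssim\theta$ and $x_2$ in a bounded set.  The target pointwise kernel estimate is
\[
|K(s,t;\tilde s,\tilde t)|\lesssim_N\theta\bigl(1+\lambda\theta(|s-\tilde s|+|t-\tilde t|)\bigr)^{-N}
\quad\text{for every }N,
\]
from which the change of variables $u=\lambda\theta(\tilde s-s)$, $v=\lambda\theta(\tilde t-t)$ and Schur's test yield $\|T_\lambda^*T_\lambda\|_{L^2\to L^2}\lesssim\lambda^{-2}\theta^{-1}$, which is the square of the desired operator-norm bound.

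The central step is a lower bound on the $x$-gradient of $\Phi:=\Psi(x;\tilde s,\tilde t)-\Psi(x;s,t)$.  To deal with the $\theta^{-j}$ loss in $x_1$-derivatives of $b$ in \eqref{3.20}, I would first rescale $x_1=\theta y_1$; in the new variables the symbol is bounded with bounded derivatives in $(y_1,x_2)$, and the corresponding mixed Hessian of $\Psi$ in the rescaled variables satisfies $\nabla_{(y_1,x_2)}\nabla_{(s,t)}\Psi = \theta\,\widetilde M$ for
\[
\widetilde M=\begin{pmatrix}\partial_{x_1}\partial_s\Psi & \partial_{x_1}\partial_t\Psi\\
\theta^{-1}\partial_{x_2}\partial_s\Psi & \theta^{-1}\partial_{x_2}\partial_t\Psi\end{pmatrix}.
\]
The four conditions \eqref{3.21}--\eqref{3.23} pin down the entries of $\widetilde M$ on the support of the integrand: the $(1,1)$ entry is bounded below in absolute value by a positive constant by \eqref{3.21} and continuity, since $|x_1|,|s|,|t|\lesssim\theta$; the $(1,2)$ entry is $O(\delta+\theta)$ by \eqref{3.23} and Taylor expansion in $t$; the $(2,1)$ entry is $O(1)$, since $\partial_{x_2}\partial_s\Psi(0,x_2;0,0)=0$ by \eqref{3.21} together with the support constraint makes $\partial_{x_2}\partial_s\Psi=O(\theta)$; and the $(2,2)$ entry is bounded below in absolute value by a positive constant by \eqref{3.22}.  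Provided $\delta$ is small enough, an elementary determinant/trace computation gives $|\det\widetilde M|\gtrsim 1$ and $\|\widetilde M\|\lesssim 1$ uniformly.  Writing $\nabla_{(y_1,x_2)}\Phi$ via the fundamental theorem of calculus as $\theta\cdot(\text{convex average of }\widetilde M)\cdot(\tilde s-s,\tilde t-t)^T$ then yields $|\nabla_{(y_1,x_2)}\Phi|\gtrsim\theta(|s-\tilde s|+|t-\tilde t|)$ on the support of the integrand, and repeated non-stationary-phase integration by parts produces the claimed kernel bound, with the rescaling Jacobian supplying the prefactor $\theta$.

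The main technical obstacle is the verification that the averaged matrix $\widetilde M$ remains non-degenerate: condition \eqref{3.22} only provides a pointwise lower bound on the support of $b$, whereas the average in the fundamental theorem of calculus traverses the entire segment from $(s,t)$ to $(\tilde s,\tilde t)$.  One must exploit the support constraint $|s|,|\tilde s|,|t|,|\tilde t|\lesssim\theta$ together with the smoothness of $\Psi$ to rule out sign changes of $\partial_{x_2}\partial_t\Psi$ along this short segment.  This is the analogue in our anisotropic setting of the non-degeneracy of the mixed Hessian that drives H\"ormander's classical $L^2$ oscillatory integral theorem in \cite{H}, adapted here to the regime where the phase is only non-degenerate ``at scale $\theta$'' and where Gauss' lemma is used to control the individual entries of the mixed Hessian in Fermi normal coordinates.
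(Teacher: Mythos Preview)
Your proposal is correct and follows essentially the same strategy as the paper: a $T_\lambda^*T_\lambda$ argument with Schur's test, the kernel bound coming from integration by parts driven by lower bounds on the $x$-gradient of the phase difference. The only organizational difference is packaging. You rescale $x_1=\theta y_1$ at the outset to make the amplitude uniformly smooth and then work with the full $(y_1,x_2)$-gradient via the matrix $\widetilde M$, arriving at the single bound $|K|\lesssim\theta(1+\lambda\theta r)^{-N}$. The paper keeps the original variables and instead splits into two cases: when $|s-s'|\gtrsim|t-t'|$ it integrates by parts in $x_1$ alone, using \eqref{3.21} and \eqref{3.23} to get $|\partial_{x_1}\Phi|\gtrsim|s-s'|$ (and paying $\theta^{-1}$ per derivative of the amplitude), while when $|t-t'|\gg|s-s'|$ it integrates by parts in $x_2$ alone, using \eqref{3.21}, \eqref{3.22} and \eqref{3.17} to get $|\partial_{x_2}\Phi|\gtrsim\theta|t-t'|$ together with the improved bound $|\partial_{x_2}^k\Phi|\lesssim\theta^k r^k$. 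This yields the two-term bound \eqref{3.25}, which is equivalent to yours for the purposes of Schur. Your rescaling is a tidy way to merge the two cases; the paper's case split makes the role of each hypothesis a bit more transparent.

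The obstacle you single out---whether the lower bound from \eqref{3.22} persists along the whole segment joining $(s,t)$ to $(\tilde s,\tilde t)$---is precisely the point the paper also passes over quickly in its second case. In the actual application this is not an issue: for fixed $\mu,\mu'$ with $|\mu-\mu'|\ge N_0$ large, the tubes $\mathcal T_{C\theta}(\gamma_\mu)$ and $\mathcal T_{C\theta}(\gamma_{\mu'})$ force $y_1-y_1'=2t$ (hence $\partial_{x_2}\partial_t\Psi$) to have a fixed sign on the support of $b$, so no cancellation can occur along the segment and your averaged-matrix argument goes through.
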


We shall include the proof of this result for the sake of completeness even though it is a standard result.  It is  a slight variant of the main lemma in H\"ormander's
proof of the Carleson-Sj\"olin theorem in \cite{H} (see \cite[pp. 61-62]{SBook}).  H\"ormander's proof gives this result in the special case where $y_2=y'_2$, and, as above,
$\Psi$ is defined by two copies of the Riemannian distance function.  The case where $y_2$ and $y_2'$  are not equal to each other introduces some technicalities that, as we shall see, are straightforward
to overcome.

\begin{proof}  Inequality \eqref{3.24} is equivalent to the statement that $\|T^*_\la T_\la\|_{L^2\to L^2}\le C\la^{-2}\theta^{-1}$.  The kernel of
$T^*_\la T_\la$ is
\begin{multline*}K(s,t;s',t')=\iint e^{i\la(\Psi(x;s,t)-\Psi(x;s',t'))}a(x;s,t,s',t')\, dx_1dx_2,
\\ \text{if } \, \, a(x;s,t,s',t')=b(x,s,t)\overline{b(x;s',t')},
\end{multline*}
Therefore, we would have this estimate if we could show that
\begin{multline}\label{3.25}
|K(s,t;s',t')|
\le C\theta^{1-N}\bigl(1+\la|(s-s',t-t')|\bigr)^{-N}
\\
+
C\theta\bigl(1+\la\theta|(s-s',t-t')|\bigr)^{-N}, 
\quad
N=0,1,2,3,
\end{multline}
for then by using the $N=0$ bounds for  the regions where $|(s-s',t-t')|\le (\la\theta)^{-1}$ and the $N=3$ bounds in the complement, we see that
$$\sup_{s,t}\iint |K|\, ds'dt', \, \, \sup_{s',t'}\iint |K|\, ds dt \le C\la^{-2}\theta^{-1},$$
which means that, by Young's inequality, $\|T^*_\la T_\la\|_{L^2\to L^2}\le C\la^{-2}\theta^{-1}$, as desired.

The bound for $N=0$ follows from the first part of \eqref{3.20}.  To prove the bounds for $N=1,2,3$, we need to integrate by parts.

Let us first handle the case where
\begin{equation}\label{3.26}|s-s'|\ge A^{-1}|t-t'|,
\end{equation}
where $A\ge1$ is a possibly fairly large constant which we shall specify in the next step.
By the second part of \eqref{3.21} and by \eqref{3.23}, we conclude that if $\delta>0$ is sufficiently small (depending on $A$), we have
\begin{equation}\label{3.27}\Bigl|\frac\partial{\partial x_1}\bigl(\Psi(x;s,t)-\Psi(x;s',t')\bigr)\Bigr|\ge c|s-s'|, \quad
|s-s'|\ge A^{-1}|t-t'|,
\end{equation}
for some uniform constant $c>0$.  

Since $|K|$ is trivially bounded by the second term in the right side of \eqref{3.25} when $|s-s'|\le (\la\theta)^{-1}$ and
\eqref{3.26} is valid,
we shall assume that $|s-s'|\ge (\la\theta)^{-1}$.  

If we then write
\begin{multline}\label{i}
e^{i\la(\Psi(x;s,t)-\Psi(x;s',t'))}=Le^{i\la(\Psi(x;s,t)-\Psi(x;s',t'))}, 
\\ \text{where } \, L(x,D)=\frac1{i\la(\Psi'_{x_1}(x;s,t)-\Psi'_{x_1}(x;s',t'))}\, \frac\partial{\partial x_1},
\end{multline}
then we obtain
$$|K|\le \iint |(L^*(x,D))^N \, a(x;s,t,s',t')| \, dx.$$
Note that
\begin{multline}\label{3.28}
|\la(\Psi'_{x_1}(x;s,t)-\Psi'_{x_1}(x;s',t'))|^N \, |(L^*)^Na|
\\
\le C_N\sum_{0\le j+k\le N} \Bigl|\frac{\partial^j}{\partial x_1^j} a\Bigr| \times \sum_{\alpha_1+\dots+\alpha_k\le N}\frac{\prod_{m=1}^k
\bigl|\frac{\partial^{\alpha_m}}{\partial x_1^{\alpha_m}}
(\Psi'_{x_1}(x;s,t)-\Psi'_{x_1}(x;s',t'))\bigl|}{\bigl| \Psi'_{x_1}(x;s,t)-\Psi'_{x_1}(x;s',t')\bigr|^k}.
\end{multline}
Clearly,
\begin{equation}\label{j}\prod_{m=1}^k
\bigl|\frac{\partial^{\alpha_m}}{\partial x_1^{\alpha_m}}
(\Psi'_{x_1}(x;s,t)-\Psi'_{x_1}(x;s',t'))\bigl|\le C_k|(s-s',t-t')|^k,\end{equation}
and consequently, by \eqref{3.26} and \eqref{3.27},
\begin{equation}\label{k}
\frac{\prod_{m=1}^k\bigl|\frac{\partial^{\alpha_m}}{\partial x_1^{\alpha_m}}
(\Psi'_{x_1}(x;s,t)-\Psi'_{x_1}(x;s',t'))\bigl|}{\bigl|\Psi'_{x_1}(x;s,t)-\Psi'_{x_1}(x;s',t')\bigr|^k}\le C_{A,k}.
\end{equation}
Since by \eqref{3.20}, we have that $|\partial_{x_1}^j a|\le C\theta^{-j}$, $j=0,1,2,3$, and \eqref{3.20} also says that $a$ vanishes when $|x_1|$ is larger than
a fixed multiple of $\theta$, we conclude from \eqref{3.27}-\eqref{k} that if \eqref{3.26} holds then $|K|$ is dominated by the first term in the right side of \eqref{3.25}.

We now turn to the remaining case which is
\begin{equation}\label{l}
|t-t'|\ge A|s-s'|,
\end{equation}
and where the parameter $A\ge1$ will be specified.  By the first part of \eqref{3.21} and by \eqref{3.22} and
the fact that $|s|, \, |s'|, \, |t|, \, |t'|$ are bounded by a fixed multiple of $\theta$ in the support of $a$, it follows that we can fix $A$ (independent of $\theta$ small) so that if \eqref{l} is valid
then 
$$\bigl|\frac\partial{\partial x_2}\bigl(\Psi(x;s,t)-\Psi(x;s',t')\bigr)\bigr|\ge c\theta |t-t'|, \quad \text{on supp }a,
$$
for some uniform constant $c>0$.
Then since \eqref{3.17} implies that
$$\prod_{m=1}^k\Bigl| \frac{\partial^{\alpha_m}}{\partial x_2^{\alpha_m}}
\bigl(\Psi'_{x_2}(x;s,t)-\Psi'_{x_2}(x;s',t')\bigr)\Bigr|\le
C_k\theta^k|(s-s',t-t')|^k, \quad \text{on supp }a,
$$
and since, by \eqref{3.20}, 
$$|\partial^j_{x_2}a|\le C_N, \quad 1\le j\le N,$$
we conclude that, if we repeat the argument just given but now integrate
by parts with respect to $x_2$ instead of $x_1$, then $|K|$ is bounded by second term in the right side
of 
\eqref{3.25}, which completes the proof of Lemma~\ref{lemma3.2}.
\end{proof}

To conclude matters, we also need to prove the orthogonality estimates \eqref{3.4} and \eqref{3.10}.  Since \eqref{3.4} is a special case of \eqref{3.10}, we just need to establish the latter.

To see this, we note that by Lemma~\ref{kerlemma}, if $(\chi_\la Q^\mu_\theta)(x,y)$ denotes the kernel
of $\chi_\la Q^\mu_\theta$, then
\begin{multline*}
\bigl(\chi_\la Q^\mu_\theta\bigr)(x,y)\bigl(\chi_\la Q^{\mu'}_\theta\bigr)(x,y')
\overline{\bigl(\chi_\la Q^{\tilde\mu}_\theta\bigr)(x,\tilde y)}
\overline{\bigl(\chi_\la Q^{\tilde \mu'}_\theta\bigr)(x,\tilde y')}=O_N(\la^{-N})
\\
\text{if } \, \, x\notin 
{\mathcal T}_{C\theta}(\gamma_\mu)\cap {\mathcal T}_{C\theta}(\gamma_{\mu'})
\cap {\mathcal T}_{C\theta}(\gamma_{\tilde \mu})
\cap {\mathcal T}_{C\theta}(\gamma_{\tilde \mu'}),
\end{multline*}
with $C$ sufficiently large and the geodesics defined by \eqref{geod}.  On the other hand, if 
$x$ is in the above intersection of tubes, then the condition on $(\mu,\mu',\tilde \mu, \tilde \mu')$
in \eqref{3.10} ensures that if the constant $C$ there is large enough we have
\begin{multline*}
\bigl|\nabla_x\bigl(d_g(x,y)+d_g(x,y')-d_g(x,\tilde y)-d_g(x,\tilde y')\bigr)\bigr|\ge c_0\theta
\\
\text{if }\, \, y\in {\mathcal T}_{C\theta}(\gamma_\mu), \, 
y'\in {\mathcal T}_{C\theta}(\gamma_{\mu'}), \, 
\tilde y\in {\mathcal T}_{C\theta}(\gamma_{\tilde \mu}),
\, \, \text{and } \, \tilde y'\in {\mathcal T}_{C\theta}(\gamma_{\tilde \mu'}),
\end{multline*}
for some uniform $c_0>0$.  Thus, \eqref{3.10} follows from Lemma~\ref{kerlemma} and a simple
integration by parts argument since we are assuming that
$\theta\ge \theta_0=\la^{-\frac12+\e_0}$ with $\e_0>0$.

\newsection{Relationships with Zygmund's $L^4$-toral eigenfunction bounds}

Recall that for $\torus$ Zygmund~\cite{Zy} showed that if $e_\la$ is an eigenfunction on $\torus$, i.e.,
\begin{equation}\label{4.1}e_\la(x)=\sum_{\{\e\in \Zt: \, |\ell|=\la\}}a_\ell e^{ix\cdot \ell},
\end{equation}
then
$$\|e_\la\|_{L^4(\torus)}\le C,$$
for some uniform constant $C$.

As observed in \cite{BGT}, using well-known pointwise estimates in two-dimensions, one has
$$\sup_{\gamma\in \varPi} \int_{\gamma} |e_\la|^2\, ds =O_\e(\la^\e),$$
for all $\e>0$.  This of course implies that one also has
$$\sup_{\gamma\in \varPi} \int_{{\mathcal T}_{\la^{-\frac12}}(\gamma)} \, |e_\la|^2 dx = O_\e(\la^{-\frac12 +\e}),$$
for any $\e>0$.

Sarnak \cite{Sar} made an interesting observation that having $O(1)$ geodesic restriction bounds for $\torus$, is equivalent to the statement that there is a uniformly bounded number
of lattice points on arcs of $\la S^1$ of aperture $\la^{-\frac12}$.  
\footnote{Cilleruelo and C\'ordoba~\cite{CC} showed that this is the case for arcs of aperture $\la^{-\frac12-\delta}$ for
any $\delta>0$.}

Using \eqref{1.1} we can essentially recover Zygmund's bound and obtain $\|e_\la\|_{L^4(\torus)}=O_\e(\la^\e)$ for every $\e>0$.  (Of course this just follows from the pointwise estimate, but it shows how the method is natural too.)

If we could push the earlier results to include $\e_0=0$ and if we knew that there were uniformly bounded restriction bounds, then we would recover Zygmund's estimate.

\end{document}